\documentclass[11pt]{article}
\usepackage{amsmath,amssymb}

\newtheorem{propo}{{\bf Proposition}}[section]
\newtheorem{coro}[propo]{{\bf Corollary}}
\newtheorem{lemma}[propo]{{\bf Lemma}} \newtheorem{theor}[propo]{{\bf
Theorem}} \newtheorem{ex}{{\sc Example}}[section]

\newenvironment{proof}{{\bf Proof.}}{$\Box$}

\def\C{{\mathbb C}}
\def\N{{\mathbb N}}
\begin{document}
\vspace*{1.0in}

\begin{center} ABELIAN SUBALGEBRAS AND IDEALS OF MAXIMAL DIMENSION IN SOLVABLE LEIBNIZ ALGEBRAS

\end{center}
\bigskip

\centerline {Manuel Ceballos } \centerline {Dpto. de Ingenier\'{\i}a, Universidad Loyola Andaluc\'{\i}a} \centerline
{Av. de las Universidades, s/n, 41704 Dos Hermanas, Sevilla, Spain.}
\centerline {and}

\centerline {David A. Towers} \centerline {Department of
Mathematics, Lancaster University} \centerline {Lancaster LA1 4YF,
England}
\bigskip

\begin{abstract}
In this paper, we compare the abelian subalgebras and ideals of maximal dimension for finite-dimensional Leibniz algebras. We study Leibniz algebras containing abelian subalgebras of codimension $1$, solvable and supersolvable Leibniz algebras for codimensions $1$ and $2$, nilpotent Leibniz algebras in case of codimension $2$ and we also analyze the case of $k$-abelian $p$-filiform Leibniz algebras. Throughout the paper, we also give examples to clarify some results and the need for restrictions on the underlying field.

\end{abstract}

\noindent {\it Mathematics Subject Classification 2010:} 17B05, 17B20, 17B30, 17B50. \\
\noindent {\it Key Words and Phrases:} Leibniz algebra, abelian subalgebra, abelian ideal, solvable, nilpotent.

\section{Introduction}
\medskip

Leibniz algebras were introduced by Loday \cite{Loday} in 1993. They are a particular case of non-associative algebras and a non-anticommutative generalization of Lie algebras. In fact, Leibniz algebras inherit an important property of Lie algebras: the right-multiplication operator is a derivation. Many well-known results on Lie algebras can be extended to Leibniz algebras. There exists an extensive body of research on these algebras due to its own theoretical importance as well as its application to other fields such as Engineering, Physics and Applied Mathematics (see \cite{ao,Bosco,CS,Fila} for example). However, some aspects about Leibniz algebras remain unknown such as their classification. According to \cite{barnes}, this classification can be reduced to semisimple and solvable algebras. In 1890, Killing and Cartan classified semisimple Lie algebras and the Leibniz case can be described from simple Lie ideals. The current techniques do not allow researchers to deal successfully with the classification problem of solvable Lebiniz algebras. Therefore, the need for studying different properties of Leibniz algebras arises. For example, conditions on the lattice of subalgebras of a Leibniz algebra could lead to information about the algebra itself. Studying abelian subalgebras and ideals of Leibniz algebras constitutes the main goal of this paper.

Throughout $L$ will denote a finite-dimensional Leibniz algebra over a field $F$ and for every $x,y,z \in L$, the so-called Leibniz identity is satisfied:
\[  [x,[y,z]]=[[x,y],z]-[[x,z],y]
\]
In other words, the right multiplication operator $R_x : L \rightarrow L : y\mapsto [y,x]$ is a derivation of $L$. As a result, such algebras are sometimes called {\it right} Leibniz algebras, and there is a corresponding notion of {\it left} Leibniz algebra. Clearly, the opposite of a right Leibniz algebra is a left Leibniz algebra so, for our purposes, it does not matter which is used. Every Lie algebra is a Leibniz algebra and every Leibniz algebra satisfying $[x,x]=0$ for every element is a Lie algebra. The set of right multiplication operators $\mathfrak{R}(L)=\{R_x : x\in L\}$ forms an ideal of the Lie algebra of derivations of $L$, Der($L$).

Let us consider $I=span\{x^2:x\in L\}$. Then $I$ is an ideal of $L$ and $L/I$ is a Lie algebra called the {\em liesation} of $L$.
We shall call $L$ {\em supersolvable} if there is a chain $0=L_0 \subset L_1 \subset \ldots \subset L_{n-1} \subset L_n=L$, where $L_i$ is an $i$-dimensional ideal of $L$.  We define the following series:
\[ L^1=L,L^{k+1}=[L^k,L] \hbox{ and } L^{(1)}=L,L^{(k+1)}=[L^{(k)},L^{(k)}] \hbox{ for all } k=2,3, \ldots
\]
Then $L$ is {\em nilpotent} (resp. {\em solvable}) if $L^n=0$ (resp. $ L^{(n)}=0$) for some $n \in \N$. The {\em nilradical}, $N(L)$, (resp. {\em radical}, $R(L)$) is the largest nilpotent (resp. solvable) ideal of $L$. The {\it Frattini ideal} of $L$, $\phi(L)$, is the largest ideal of $L$ contained in all maximal subalgebras of $L$. We will denote the {\it centre} of $L$ by $Z(L)=\{x \in L: [x,y]=[y,x]=0, \, \forall \, y \in L\}$. If $H,K$ are subsets of $L$ with $H\subseteq K$, the {\em right centraliser of $H$ in $K$} is $C_K^r(H)=\{k\in K : [H,k]=0\}$.
\par
A nilpotent Leibniz algebra $L$ of dimension $n$ is said to be $p$-{\em filiform} if
${\rm dim}(L^i)=n-p-i+1$, for $2 \leq i \leq n-p+1$. We say that $L$ is $k$-{\em abelian} if $k$ is the smallest positive integer such that $L^k$ is abelian. For general properties of Leibniz algebras see \cite{book}.

Algebra direct sums will be denoted by $\oplus$, whereas vector space direct
sums will be denoted by $\dot{+}$. The assumptions on the field $F$ will be specified in each result. We consider the following invariants of $L$:
$$\alpha(L) = \max \{\, \dim (A) \, | \, A \,\, {\rm is \,\, an \,\, abelian \,\, subalgebra \,\,
of \,\, } L\},$$ $$\beta(L)  = \max \{\,
\dim (B) \, | \, B \,\, {\rm is \,\, an \,\,
abelian \,\, ideal \,\,of \,\,}L\}.$$
Both invariants are important for many reasons. For example, they are
very useful for the study of contractions
and degenerations; in particular, an algebra $L_1$ does not degenerate into $L_2$ if $\dim \alpha (L_1)> \dim \alpha(L_2)$ (see \cite[Corollary (6)]{R}. There is a large literature, in particular, for
low-dimensional Lie algebras, see
\cite{GRH,BU10,NPO,SEE,GOR}, and, more recently, for Leibniz algebras, see \cite{CKLO, FKRV, IKV, MHY,R, RA} and the references given therein.
\par

The authors of this paper have already studied these invariants for Lie algebras in
\cite{bc,CT,Tow}. More concretely, in \cite{bc} it was shown that for a solvable Lie algebra $L$ over an algebraically closed field of characteristic zero, $\alpha(L)=\beta(L)$ and the cases of abelian subalgebras of codimension $1$ and $2$ were also studied. In \cite{CT}, the authors proved that $n$-dimensional supersolvable Lie algebras $L$ with $\alpha(L)=n-2$ verify $\beta(L)=n-2$. They also proved the same for nilpotent Lie algebras over a field of characteristic different from two having abelian subalgebras of codimension $3$. This result was generalised for supersolvable Lie algebras in \cite{Tow} and the same was proved for nilpotent Lie algebras containing abelian subalgebras of codimension $4$. However, we have not found a similar study in the literature for Leibniz algebras. Therefore, studying abelian subalgebras and ideals of maximal dimension in Leibniz algebras constitutes the main goal of this paper.

The structure of this current paper is as follows. In Section $2$, we study the case of codimension one proving that every $n$-dimensional solvable Leibniz algebra $L$ over a field of characteristic different from two with $\alpha(L)=n-1$ also satisfies $\beta(L)=n-1$. We also show that this is true for supersolvable Leibniz algebra over any field. In Section $3$, we prove that the alpha and beta invariants also coincide for nilpotent Leibniz algebras over a field of characteristic different from two. Moreover, we characterise solvable Leibniz algebras containing abelian subalgebras of codimension two. In this case, the equality of alpha and beta can be assured for supersolvable Leibniz algebras. Finally, we study the case of filiform Leibniz algebras proving that there is a unique abelian ideal of maximal dimension for $k$-abelian $p$-filiform Leibniz algebras. We also provide examples in each section.

\section{Abelian subalgebras of codimension one}
\begin{theor}\label{solvcodim1} Let $L$ be an $n$-dimensional Leibniz algebra over a field of characteristic different from $2$ satisfying $\alpha(L)=n-1$. Then $\beta(L)=n-1$.
\end{theor}
\begin{proof} Let $A$ be an abelian subalgebra of dimension $n-1$. If $L^2\subseteq A$ then $A$ is also an abelian ideal. Otherwise, choose a basis $e_1,\dots, e_n$ for $L$ such that $A=$ span$\{e_2,\ldots, e_n\}$. Then
\[  [e_1,e_j]=\sum_{i=1}^n \alpha_{ji}e_i,   [e_j,e_1]=\sum_{i=1}^n \beta_{ji}e_i \hbox{ and } e_1^2=\sum_{i=1}^n \gamma_ie_i
\] where $\alpha_{ji}, \beta_{ji} \in F$ for $j=2,\dots, n$. Then $e_1^2, [e_1,e_j]+[e_j,e_1]\in I$ , so for $j,k=2,\ldots n$
\begin{align}
0 & =[e_j,e_1^2] = \gamma_1[e_j,e_1] \\
0 & =[e_j,[e_1,e_k]+[e_k,e_1]] = (\alpha_{k1}+\beta_{k1})[e_j,e_1] \\
 0 & =[e_1,[e_2,e_j]]  =[[e_1,e_2],e_j]-[[e_1,e_j],e_2] \nonumber \\
&  =\alpha_{21}[e_1,e_j]-\alpha_{j1}[e_1,e_2] \\
0 & =[[e_2,e_j],e_1]=[e_2.[e_j,e_1]]+[[e_2,e_1],e_j] \nonumber \\
& =\beta_{j1}[e_2,e_1]+\beta_{21}[e_1,e_j].
\end{align}
Now (1) and (2) imply that $[e_j,e_1]=0$ for all $j=2,\ldots, n$, or $\gamma_1=0$ and $\beta_{j1}=-\alpha_{j1}$ for all $j=2,\ldots, n$.
\par

Suppose first that the former holds. If $A$ is not an ideal, there is an $e_j$ such that $[e_1,e_j]\notin A$. By relabelling we can assume that $[e_1,e_2]\notin A$, whence $\alpha_{21}\neq 0$. Put $v_j=\alpha_{j1}\alpha_{21}^{-1}e_2-e_j$ for $j=2,\ldots, n$. Then $[e_1,e_2]\in I$ so $[e_1,e_2]^2=0$, $[v_j,[e_1,e_2]]=0$ and
\[ [[e_1,e_2],v_j]=\alpha_{j1}[e_1,e_2]-\alpha_{21}[e_1,e_j] = 0 \hbox{ by (3)},
\]
so $B=F[e_1,e_2]+Fv_3+\ldots +Fv_n$ is an abelian subalgebra of $L$. Also,
\begin{align*}
[e_1,v_j] & = \alpha_{j1}\alpha_{21}^{-1}[e_1,e_2]-[e_1,e_j]=0, & [v_j,e_1]=0 \\
[[e_1,e_2],e_1] & =[e_1,[e_2,e_1]]+[e_1^2,e_2]=\gamma_1[e_1,e_2] \in B, & [e_1,[e_1,e_2]]=0,
\end{align*} and $B$ is an ideal, as required.
\par

So suppose now that the latter holds. If  $\alpha_{j1}=0=\beta_{j1}$ for all $j=2,\dots, n$, we have that $A$ is an ideal of $L$. Hence, we may again assume that $\alpha_{21}\not = 0$. Now (3) and (4) imply that
\[ \alpha_{21}[e_1,e_j]=\alpha_{j1}[e_1,e_2]=-\beta_{j1}[e_1,e_2]=\beta_{21}[e_j,e_1]=-\alpha_{21}[e_j,e_1],
\] so $[e_1,e_j]=-[e_j,e_1]$ for all $j=2,\ldots, n$. Moreover,
\begin{align*}
 \alpha_{21}e_1^2+\sum_{i=2}^n\alpha_{2i}[e_1,e_i]&=[e_1,[e_1,e_2]]=[e_1^2,e_2]-[[e_1,e_2],e_1] \\
& =-\alpha_{21}e_1^2-\sum_{i=2}^n\alpha_{2i}[e_i,e_1] \\
& =-\alpha_{21}e_1^2+\sum_{i=2}^n\alpha_{2i}[e_1,e_i].
\end{align*} It follows that $e_1^2=0$, whence $L$ is a Lie algebra and the result follows from \cite[Proposition 3.1]{bc}
\end{proof}
\medskip

The above result does not hold over a field of characteristic $2$, as the following example shows.

\begin{ex} Let $L=Fe_1+Fe_2+Fe_3$ where $[e_1,e_2]=[e_2,e_1]=e_1$, $e_1^2=e_3$ and all other products zero. This is a Leibniz algebra when $F$ has characteristic $2$. Then $A=Fe_2+Fe_3$ is an abelian subalgebra of dimension $2$. However, any ideal of dimension $2$ must contain $L^2=Fe_1+Fe_3$ and this is not abelian.
\end{ex}

\begin{propo} Let $L$ be a non-abelian Leibniz algebra over an algebraically closed field. Then $L$ has a maximal subalgebra $A$ which is abelian if and only if $A$ has codimension one in $L$.
\end{propo}
\begin{proof} Suppose first that $L$ has a maximal subalgebra $A$ which abelian. If $A$ is an ideal of $L$ the result is proved. So suppose that $L$ is self-idealising, in which case it is a Cartan subalgebra of $L$ Then the Fitting decomposition of $L$ relative to $A$ is $L=A\dot{+}L_1$, where $L_1=\cap_{i=1}^{\infty}R_A^i(L)$, by \cite[Proposition 2.11]{book}. Then $\{R_a |_{L_1} : a\in A\}$ is a set of simultaneously triangulable linear maps. Hence there exists $b\in B$ such that $[b,a]=R_a(b)=\lambda_ab$ for all $a\in A$, where $\lambda_a\in F$. Put $B=A+Fb$.
\par

Suppose first that $I\subseteq A$. Then $[a,b]+\lambda_ab=[a,b]+[b,a]\in A$, so $[a,b]\in B$. Also, $b^2\in I\subseteq A$. It follows that $B$ is a subalgebra of $A$, and so $L=B=A+Fb$, as required.
\par

So suppose now that $I\not\subseteq A$. Then $L=A+I$ and $L_1\subseteq L^2\subseteq I$. Hence $[a,b]=0=b^2$, from which we have that $B$ is a subalgebra again, and $B=L$
\par

The converse is clear.
\end{proof}
\medskip

If $B$ is an ideal of $L$ and $A/B$ is a minimal ideal of $L/B$ we call $A/B$ a {\em chief factor} of $L$.

\begin{theor}\label{solv} Let $L$ be a solvable Leibniz algebra over any field $F$. Then $L$ has a maximal subalgebra that is abelian if and only if one of the following occurs:
\begin{itemize}
\item[(i)] $L$ has an abelian ideal of codimension $1$ in $L$;
\item[(ii)] $L$ splits over $L^2$ and either
\begin{itemize}
\item[(a)] $L^2=I$, $L^{(2)}=\phi(L)=Z(L)\cap L^2=0$ and $L^2$ is a minimal ideal of $L$; or
\item[(b)] $L^{(2)}+I=\phi(L)= Z(L)\cap L^2$ and $L^2/(L^{(2)}+I)$ is a chief factor of $L$.
\end{itemize}
\end{itemize}
\end{theor}
\begin{proof} Suppose that $L$ has a maximal subalgebra $M$ that is abelian, and that case (i) does not hold. Then, by the maximality, $M$ is self-idealising and so is a Cartan subalgebra of $L$. Also, by the solvability, there is a $k\geq 1$ such that $L^{(k)}\not\subseteq M$ but $L^{(k+1)}\subseteq M$. Then $L=M+L^{(k)}$, whence $L^2\subseteq L^{(k)}$. It follows that $L^{(2)}\subseteq L^{(k+1)}\subseteq M$.
\par

Suppose first that $I\not\subseteq M$. Then $L=M+I$ and $L^2=I$, so $L^{(2)}=0$. Then $M$ is a Cartan subalgebra of $L$ and $L$ has Fitting decomposition $L=M\dot{+}L_1$ relative to $\mathfrak{R}(M)$ (see \cite[Proposition 2.1]{book}). Then $L_1\subseteq L^2 =I$ and so is an ideal of $L$. Moreover, since $M$ is a maximal subalgebra of $L$, $L_1$ is a minimal ideal of $L$ and $L_1=L^2$. Clearly, $\phi(L)=0$ and $Z(L)\cap L^2=0$, since, otherwise, $L^2\subseteq Z(L)$, by the minimality. We thus have case (ii)(a).
\par

So suppose now that $I\subseteq M$. If $S$ is a subalgebra of $L$, denote by $\overline{S}$ its image under the canonical homomorphism onto $L/(L^{(2)}+I)$. Then $\overline{M}$ is a Cartan subalgebra of $\overline{L}$, by \cite[Corollary 6.3]{barnes} and $\overline{L}$ has Fitting decomposition $\overline{L}=\overline{M}\dot{+}\overline{L}_1$ relative to $\mathfrak{R}(\overline{M})$ (see \cite[Proposition 2.1]{book}). Then $\overline{L}_1\subseteq \overline{L}^2 =\overline{L^2}$ and so is a right ideal of $\overline{L}$. But $\overline{L}$ is a Lie algebra, so $\overline{L}_1$ is an ideal of $\overline{L}$. Moreover, since $M$ is a maximal subalgebra of $L$, $\overline{L}_1$ is a minimal ideal of $\overline{L}$ and $\overline{L}_1=\overline{L^2}$. It follows that $L^2/(L^{(2)}+I)$ is a chief factor of $L$. Clearly $\phi(\overline{L})=0$, so $\phi(L)\subseteq L^{(2)}+I$.
\par

Now $L^{(2)}+I\subseteq M$, so $M\subseteq C_L(L^{(2)}+I)$. But $C_L(L^{(2)}+I)$ is an ideal of $L$ and $M$ is self-idealising, so $L=C_L(L^{(2)}+I)$ and $L^{(2)}+I\subseteq Z(L)\cap L^2\subseteq L^2$. If $Z(L)\cap L^2=L^2$ then $L^2\subseteq Z(L)$ which implies that $L$ is nilpotent and so $M$ is an ideal of $L$, a contradiction. Hence $L^{(2)}+I=Z(L)\cap L^2$.
\par

 If $Z(L)\not \subseteq \phi(L)$ there is a maximal subalgebra $K$ of $L$ such that $L=Z(L)+K$ and $L^2\subseteq K$. Hence we have that $Z(L)\cap L^2\subseteq \phi(L)$. Finally, $L=M+L^2$, so choosing $C$ to be a subspace complementary to $L^2\cap M$ in $M$ shows that $L$ splits over $L^2$ and we have case (ii)(b).
\par

Consider now the converse. If (i) holds the converse is clear. So suppose that (ii) holds. Then $L=L^2\dot{+}A$, where $A$ is an abelian subalgebra of $L$. If (a) holds, then $A$ is a maximal subalgebra of $L$ which is abelian. If (b) holds, put $M=A+L^{(2)}+I$. Then $M$ is a maximal subalgebra of $L$ and $M$ is abelian.
\end{proof}

\section{Abelian subalgebras of codimension two}

\begin{lemma}\label{codim2} Let $L=M+Fx$, $M=A+Fe_1$, where $A$ is an abelian ideal of $M$, $M$ is an ideal of $L$ but $[x,A]+[A,x] \not\subseteq A$. Then $Z(M)$ has codimension at most one in $A$, $M^2\subseteq F[e_1,e_2]+Fe_1^2\subseteq A$, where $e_1=[x,e_2]$ or $e_1=[e_2,x]$ and $e_2\in A$.
\end{lemma}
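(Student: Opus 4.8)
The plan is to choose a basis of $M$ adapted to $A$, record all the relevant products in terms of two linear functionals on $A$, and then let the derivation property of $R_x$ do the work.

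First I would observe that $M\neq A$: since $M$ is an ideal of $L$ we have $[x,A]+[A,x]\subseteq[x,M]+[M,x]\subseteq M$, so $M=A$ would contradict the hypothesis; hence $e_1\notin A$ and the sum $M=A+Fe_1$ is direct. As $A$ is an ideal of $M$, the quotient $M/A$ is a one-dimensional Leibniz algebra, and there the square of the basis vector vanishes: writing $\bar e_1^2=c\bar e_1$, the Leibniz identity gives $c^2\bar e_1=[\bar e_1,[\bar e_1,\bar e_1]]=0$, so $c=0$. Thus $e_1^2\in A$, and together with $[A,e_1]+[e_1,A]\subseteq A$ this already yields $M^2\subseteq A$. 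Since $[x,A]+[A,x]\not\subseteq A$, some element $[x,e_2]$ (or $[e_2,x]$) with $e_2\in A$ lies outside $A$; one checks that replacing $e_1$ by this element is harmless ($A$ is still an abelian ideal of $M=A+Fe_1$ and $M$ is still an ideal of $L$), so I may assume $e_1=[x,e_2]$ with $e_2\in A$, the case $e_1=[e_2,x]$ being symmetric.

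Now for $a\in A$ write $[a,x]=p(a)+\lambda(a)e_1$ and $[x,a]=q(a)+\mu(a)e_1$ with $p(a),q(a)\in A$ and $\lambda,\mu:A\to F$ linear. Expanding $[a,e_1]=[a,[x,e_2]]$ and $[e_1,a]=[[x,e_2],a]$ by the Leibniz identity, and using $[a,e_2]=[e_2,a]=0$ and $[p(a),e_2]=[q(a),e_2]=0$, one gets
\[
[a,e_1]=\lambda(a)[e_1,e_2],\qquad [e_1,a]=\mu(a)[e_1,e_2].
\]
Because $[e_1,e_2]\in[M,A]\subseteq A$ and $[A,A]=0$, this gives $M^2=[A,e_1]+[e_1,A]+Fe_1^2\subseteq F[e_1,e_2]+Fe_1^2\subseteq A$, which is the second assertion of the lemma.

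For $Z(M)$, note that for $a\in A$ one has $a\in Z(M)$ iff $[a,e_1]=[e_1,a]=0$, i.e. iff $\lambda(a)[e_1,e_2]=\mu(a)[e_1,e_2]=0$; if $[e_1,e_2]=0$ then $A\subseteq Z(M)$ and we are done, so assume $[e_1,e_2]\neq0$. The decisive step, which I expect to be the only non-routine point, is to apply the derivation $R_x$ to the relation $[a,b]=0$ for $a,b\in A$:
\[
0=[[a,x],b]+[a,[b,x]]=\lambda(a)\mu(b)[e_1,e_2]+\lambda(b)\lambda(a)[e_1,e_2]=\lambda(a)\bigl(\lambda(b)+\mu(b)\bigr)[e_1,e_2]
\]
for all $a,b\in A$. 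Hence either $\lambda\equiv0$, in which case $Z(M)\cap A=\ker\mu$, or $\lambda\not\equiv0$, which forces $\mu=-\lambda$ and $Z(M)\cap A=\ker\lambda$; in either case $Z(M)\cap A$ has codimension at most one in $A$ (and if $M$ is not abelian one checks directly that $Z(M)\subseteq A$). Everything apart from that last display is bookkeeping with the Leibniz identity; the single idea needed is to feed the abelian relation inside $A$ through the derivation $R_x$, which is what forces $\lambda$ and $\mu$ to be proportional.
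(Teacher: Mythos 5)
Your argument is correct and reaches both conclusions of the lemma, but the decisive step is obtained by a different mechanism from the paper's. The paper works with the ideal $I=\mathrm{span}\{u^2\}$: it exploits $[L,I]=0$ together with the memberships $e_1+[e_2,x]\in I$ and $[x,e_j]+[e_j,x]\in I$ to force the dichotomy ``$[e_j,e_1]=0$ for all $j$, or $\beta_{21}=-1$ and $\alpha_{j1}=-\beta_{j1}$'', and it runs the two normalisations $e_1=[x,e_2]$ and $e_1=[e_2,x]$ as separate cases. You instead package the $e_1$-components of $[a,x]$ and $[x,a]$ into linear functionals $\lambda,\mu$ and apply the derivation $R_x$ to the relation $[a,b]=0$ inside $A$, getting $\lambda(a)\bigl(\lambda(b)+\mu(b)\bigr)[e_1,e_2]=0$ directly; this yields $\lambda\equiv 0$ or $\mu=-\lambda$ without ever invoking $I$, and is coordinate-free and arguably cleaner. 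What the paper's route buys in exchange is the explicit relations ($[e_1,e_2]=-[e_2,e_1]$, $\alpha_{j1}=-\beta_{j1}$, the description of $Z(M)$ via the vectors $v_j$) that it reuses later in Proposition 3.2 and Theorem 3.5, so the extra bookkeeping there is not wasted. Two small remarks on your write-up: the case $e_1=[e_2,x]$ is not literally symmetric in a non-commutative setting --- the Leibniz expansions give $[e_1,a]=\mu(a)[e_2,e_1]$ rather than $\mu(a)[e_1,e_2]$, and one needs the one-line observation $[e_2,e_1]=-\lambda(e_2)[e_1,e_2]=-[e_1,e_2]$ (take $a=e_2$) before the same dichotomy argument applies, which is exactly what the paper's Case 2 supplies; and your justification that $e_1^2\in A$ via the one-dimensional quotient $M/A$ is a welcome explicit argument for a point the paper passes over rather tersely.
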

\begin{proof} Put $A=Fe_2+\ldots +Fe_n$, $M=A+Fe_1$. Then we may assume that $e_1=[x,e_2]$ or $e_1=[e_2,x]$. Let
\[ [x,e_j]=\sum_{i=1}^n \alpha_{ji}e_i, \hspace{.2cm} [e_j,x]=\sum_{i=1}^n \beta_{ji}e_i \hbox{ for } j\geq 1,\hspace{.2cm} e_1^2=\sum_{i=1}^n \gamma_i e_i.
\]
{\bf Case 1} Suppose first that $e_1=[x,e_2]$. Then, for $j\geq 2$,
\begin{align} [e_1,e_j]&=[[x,e_2],e_j]=[[x,e_j],e_2]=\alpha_{j1}[e_1,e_2] \\
[e_j,e_1]&=[e_j,[x,e_2]]=[[e_j,x],e_2]=\beta_{j1}[e_1,e_2]
\end{align}
Also $e_1+[e_2,x]\in I$, so
\begin{align}
0=[e_j,e_1]+[e_j,[e_2,x]]=[e_j,e_1]+\beta_{21}[e_j,e_1],
\end{align} whence $[e_j,e_1]=0$ for all $j\geq 2$ or $\beta_{21}=-1$.
\par

Put $v_j=e_j-\alpha_{j1}e_2$ for $j\geq 3$. Then the former, together with (5), shows that $B=Fv_3+\dots +Fv_n\subseteq Z(M)$. So suppose that $\beta_{21}=-1$. Then $[e_1,e_2]=-[e_2,e_1]$ from (6). But now $[x,e_j]+[e_j,x]\in I$, so
\begin{align}
0&=[e_2,[x,e_j]+[e_j,x]]=(\alpha_{j1}+\beta_{j1})[e_2,e_1]
\end{align}
Hence $[e_2,e_1]=0$ or $\alpha_{j1}=-\beta_{j1}$ for $j\geq 2$. Each of these, together with (5) and (6), implies that $B\subseteq Z(M)$ again, so $Z(M)$ has codimension at most one in $A$.
\medskip

\noindent {\bf Case 2} Suppose now that $e_1=[e_2,x]$. Then, for $j\geq 2$,
\begin{align} [e_1,e_j]&=[[e_2,x],e_j]=[e_2,[x,e_j]]=\alpha_{j1}[e_2,e_1] \\
[e_j,e_1]&=[e_j,[e_2,x]]=-[[e_j,x],e_2]=-\beta_{j1}[e_1,e_2]
\end{align}
Putting $j=2$ in (10) gives $[e_2,e_1]=-[e_1,e_2]$, since $\beta_{21}=1$. Also, $e_1+[x,e_2]\in I$, so
\begin{align}
0=[e_j,e_1]+[e_j,[x,e_2]]=[e_j,e_1]+\alpha_{21}[e_j,e_1],
\end{align}
so $[e_j,e_1]=0$ for $j\geq 2$ or $\alpha_{21}=-1$.
\par

Put $v_j=e_j+\alpha_{j1}e_2$ for $j\geq 3$. Then the former, together with (9), gives $B=Fv_3+\ldots +Fv_n\subseteq Z(M)$. So suppose that $\alpha_{21}=-1$. Then (8) still holds, so  $[e_2,e_1]=0$ or $\alpha_{j1}=-\beta_{j1}$ for $j\geq 2$. Each of these, together with (9) and (10), implies that $B\subseteq Z(M)$ again, so $Z(M)$ has codimension at most one in $A$.
\par

Finally, in each case, $M^2\subseteq F[e_1,e_2]+Fe_1^2$, and $e_1^2\in A$ since $A$ is an ideal of $M$.
\end{proof}

\begin{propo}\label{nilpcodim2} Let $L$ be a nilpotent Leibniz algebra of dimension $n$ over a field $F$ of characteristic different from $2$, and let $\alpha(L)=n-2$. Then $\beta(L)=n-2$.
\end{propo}
\begin{proof} Let $A$ be an abelian subalgebra of dimension $n-2$ and assume that it is not an ideal of $L$. Then we can assume that $A$ is an ideal of a subalgebra $M$ of codimension one in $L$, by Theorem \ref{solvcodim1}. Moreover, $M$ is an ideal of $L$, by \cite[Lemma 2.2]{barnes}. It follows that the hypotheses of Lemma \ref{codim2} are satisfied. If $I\not\subseteq Z(M)$ then $I+Z(M)$ is an abelian ideal of dimension $n-2$, by Lemma \ref{codim2}. So assume that $I\subseteq Z(M)$.
\par

Using the same notation as Lemma \ref{codim2}, we have $[x,e_2]=e_1$ and $e_1+[e_2,x]\in Z(M)$, or $[e_2,x]=e_1$ and $e_1+[x,e_2]\in Z(M)$. In the first case,
\[ [e_1,e_2]=-[[e_2,x],e_2]=-[e_2,[x,e_2]]=-[e_2,e_1]
\] and in the second case
\[ [e_2,e_1]=-[e_2,[x,e_2]]=-[[e_2,x],e_2]=-[e_1,e_2],
\] so, in either case, $[e_1,e_2]=-[e_2,e_1]$.
\par

Let $C/Z(M)$ be a minimal ideal of $L/Z(M)$ inside $M/Z(M)$, then $C=F(\lambda e_1+\mu e_2)+Z(M)$. Hence $[\lambda e_1+\mu e_2,x]=\lambda[e_1,x]\mp \mu e_1\in Z(M)$, since $L$ is nilpotent. If $\lambda=0$ then $A$ is an ideal, contradicting our assumption.
\par

If not, then $[e_1,x]\mp \nu e_1\in Z(M)$ where $\nu=\mu/\lambda$. Since $L$ is nilpotent, $\mu=\nu=0$ and $C=Fe_1+Z(M)$. But now,
\begin{align}
 [[e_1,e_2],x]&=[e_1,[e_2,x]]+[[e_1,x],e_2]=\mp e_1^2 \hbox{ and } \nonumber \\
\pm e_1^2=[[e_2,e_1],x]&=[e_2,[e_1,x]]+[[e_2,x],e_1]=\mp e_1^2.
\end{align}  Since $F$ has characteristic different from $2$ we have that $e_1^2=0$ and $C$ is an abelian ideal of dimension $n-2$, whence $\beta(L)=n-2$.

\end{proof}

\begin{lemma}\label{nilrad} Let $L$ be a solvable Leibniz algebra with nilradical $N$. Then $C_L(N)\subseteq N$.
\end{lemma}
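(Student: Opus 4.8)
The natural approach is to adapt the classical Lie-algebra argument (due essentially to the fact that the centralizer of the nilradical in a solvable Lie algebra lies in the nilradical) to the Leibniz setting, being careful about the left/right asymmetry of the bracket and the ideal $I=\mathrm{span}\{x^2:x\in L\}$. First I would set $C=C_L(N)$; one should note at the outset that since $R_n$ is a derivation for each $n\in N$ and $N$ is an ideal, $C$ is itself an ideal of $L$ (the right centralizer of an ideal is an ideal). Moreover $C\cap N$ is an abelian ideal of $L$, since elements of $C$ kill $N$ on the appropriate side and $C\cap N\subseteq N$. Note also that $Z(N)\subseteq C$, and conversely we must show $C\subseteq N$.

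The key step is to suppose for contradiction that $C\not\subseteq N$ and derive a contradiction with the maximality of $N$ as a nilpotent ideal. Since $C$ is an ideal and $N$ is the nilradical, $C+N$ is an ideal; I would look at the ideal $C$ and argue that $C+N$ must then be strictly larger than $N$, so it suffices to show that if $C\not\subseteq N$ then $L$ has a nilpotent ideal properly containing $N$. The idea is that $C$ acts trivially on $N$, so in the quotient $L/(C\cap N)$, or better working directly, the ideal $N+[C,C]$-type considerations should show that $N+C$ is nilpotent: because $[C,N]=[N,C]=0$, powers of $N+C$ telescope — $(N+C)^{k+1}\subseteq N^{k+1}+(\text{terms involving }C)$, and the cross terms with $C$ vanish after one step, while $C$ itself, being a solvable ideal centralizing $N$, needs to be shown nilpotent. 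Here is where I'd use solvability of $L$: the derived series of $C$ terminates, and combined with $[C,C\cap N]=0$ one pushes down the lower central series. Concretely, I expect one shows $(N+C)^m \subseteq N + C^{(1)}$-type inclusions collapse, giving nilpotency of $N+C$, contradicting the definition of the nilradical unless $C\subseteq N$.

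The main obstacle I anticipate is precisely the step showing that $C$ (or $N+C$) is nilpotent rather than merely solvable — in the Lie case one typically invokes Engel-type or weight-space arguments, and for Leibniz algebras one must check the analogues hold (the Leibniz analogue of Engel's theorem and the behaviour of the Fitting/weight decomposition, cf. \cite[Proposition 2.1]{book}). A clean route is: $C$ is a solvable ideal, so if it is not contained in $N$, pick a minimal counterexample or pass to $C/(C\cap N)$; since $C\cap N$ is central in $C$ and $C/(C\cap N)$ embeds (via the adjoint-type action) into a setting where one can iterate, one concludes $C$ is nilpotent, hence $C\subseteq N$, a contradiction. An alternative, perhaps cleaner given the tools already available in the excerpt, is to reduce to the liesation $L/I$: if $I\subseteq N$ (which holds since $I$ is a nilpotent ideal, as $L/I$ being a Lie algebra forces... actually $I$ need not be nilpotent, so care is needed here) one could quote the Lie-algebra result for $\overline{L}=L/I$ and then lift, handling the contribution of $I$ separately using that $I\subseteq N$. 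I would pursue whichever of these makes the collapse of the lower central series most transparent, and I expect the bracket asymmetry to cause only notational, not substantive, difficulty because $C$ centralizes $N$ on both sides by definition of $Z$ together with the ideal property.
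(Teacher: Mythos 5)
Your overall strategy --- contradict the maximality of $N$ by producing a nilpotent ideal not contained in $N$ --- is the right one, and your first reduction is sound: since $[C,N]=[N,C]=0$ for $C=C_L(N)$, the cross terms vanish and $(N+C)^k=N^k+C^k$, so everything hinges on showing that $C$ is nilpotent. But that is precisely where the proposal stops. The step you yourself flag as the main obstacle is never carried out; ``I expect one shows $(N+C)^m\subseteq N+C^{(1)}$-type inclusions collapse'' is not an argument, and ``one concludes $C$ is nilpotent, hence $C\subseteq N$'' is essentially a restatement of the lemma (once $C\subseteq N$ is known, $C$ is nilpotent for free, so proving nilpotency of all of $C$ directly is circular in spirit). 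The alternative route through the liesation is likewise left incomplete --- though your specific worry there is unfounded, since $[L,I]=0$ by the Leibniz identity, so $I$ is an abelian (hence nilpotent) ideal and $I\subseteq N$ automatically.

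The missing idea, and the way the paper closes exactly this gap, is not to prove $C$ nilpotent in one go but to extract a small piece of it. If $C\not\subseteq N$, then $C/(N\cap C)$ is a nonzero ideal of the solvable algebra $L/(N\cap C)$, so it contains a non-trivial ideal $A/(N\cap C)$ of $L/(N\cap C)$ which is abelian (take the last nonzero term of its derived series; derived terms of an ideal are again ideals of $L$). Then $A^2\subseteq N\cap C$ while $A\subseteq C_L(N)$, so $A^3=[A^2,A]\subseteq [N,A]=0$. Hence $A$ is a nilpotent ideal of $L$, forcing $A\subseteq N$ and so $A\subseteq N\cap C$, contradicting non-triviality. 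This short computation ($A^3\subseteq[N,A]=0$) is the concrete mechanism your plan gestures at but never supplies.
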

\begin{proof} Suppose that $C_L(N) \not\subseteq N$. Then there is a non-trivial abelian ideal $A/(N\cap C_L(N))$ of $L/(N\cap C_L(N))$ inside $C_L(N)/(N\cap C_L(N))$. But now $A^3\subseteq [N,A]=0$, so $A$ is a nilpotent ideal of $L$. It follows that $A\subseteq N\cap C_L(N)$, a contradiction.
\end{proof}

\begin{lemma}\label{cent} Let $J$ be a right ideal of $L$ which is contained in the ideal $K$ of $L$. Then the right centraliser of $J$ in $K$, $C=C_K^r(J)$, is an ideal of $L$. Moreover, if $\dim J=1$ then $C$ has codimension at most one in $K$.
\end{lemma}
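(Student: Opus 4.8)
The plan is to prove the two assertions separately, starting with the claim that $C = C_K^r(J)$ is an ideal of $L$. Since $J$ is a right ideal contained in the ideal $K$, and $L$ is a right Leibniz algebra, the natural thing is to exploit the Leibniz identity $[x,[y,z]] = [[x,y],z] - [[x,z],y]$ to see how $[C,L]$ and $[L,C]$ interact with $J$. First I would take $c \in C$ and $x \in L$, and check that $[c,x] \in C$ and $[x,c] \in C$, i.e. that $[j,[c,x]] = 0$ and $[j,[x,c]] = 0$ for every $j \in J$. Note first that $C \subseteq K$ and $[C,L], [L,C] \subseteq K$ since $K$ is an ideal, so membership in $K$ is automatic and only the centralising condition needs checking. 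For $[j,[c,x]]$: by the Leibniz identity applied with the bracket read appropriately (using that $R_x$ is a derivation, equivalently $[[j,c],x] = [j,[c,x]] + [[j,x],c]$ in the form $[j,[c,x]] = [[j,c],x] - [[j,x],c]$), we get $[j,[c,x]] = [[j,c],x] - [[j,x],c]$. The first term vanishes because $[j,c] = 0$ (as $c \in C$ and $j \in J$), and the second term vanishes because $[j,x] \in J$ (as $J$ is a right ideal) so $[[j,x],c] = 0$ again by $c \in C$. Hence $[c,x] \in C$.

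The other side, $[x,c] \in C$, is the place I expect to need slightly more care, because $J$ is only assumed to be a \emph{right} ideal, so $[x,j]$ need not lie in $J$. I would again write, via the Leibniz identity, $[j,[x,c]] = [[j,x],c] - [[j,c],x]$; here $[j,x] \in J$ gives $[[j,x],c] = 0$, and $[j,c] = 0$ gives $[[j,c],x] = 0$, so $[j,[x,c]] = 0$ and thus $[x,c] \in C$. So in fact both computations go through cleanly using only that $J$ is a \emph{right} ideal, which is exactly why the hypothesis is stated that way. This establishes that $C$ is an ideal of $L$.

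For the second assertion, suppose $\dim J = 1$, say $J = Fj$. Then the map $\rho_j : K \to J$ given by $k \mapsto [j,k]$ (note $[j,k] \in J$ since $J$ is a right ideal, actually $[J,K] \subseteq J$) — wait, more carefully, $[j,k]$ lies in $K$ since $J\subseteq K$ and $K$ is an ideal, but we want it in $J$: this uses $J$ being a right ideal only gives $[J,L]\subseteq J$, so indeed $[j,k] \in J$. Thus $\rho_j : K \to J$ is a linear map whose kernel is precisely $C = C_K^r(J)$, and whose image lies in the one-dimensional space $J$. Therefore $K/C \cong \mathrm{im}\,\rho_j$ has dimension at most $1$, i.e. $C$ has codimension at most one in $K$.

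The main obstacle, such as it is, is purely bookkeeping: making sure each application of the Leibniz identity is written with the brackets nested in the direction that actually lets the right-ideal property $[J,L]\subseteq J$ do the work, and confirming that the left-multiplication terms $[[j,c],x]$ really do vanish from $[j,c]=0$ rather than needing $[c,j]=0$. Since the centraliser is defined as $C_K^r(J) = \{k \in K : [J,k] = 0\}$, the relevant vanishing is $[j,c] = 0$, which is exactly what appears in both computations above, so no left-centralising hypothesis is needed.
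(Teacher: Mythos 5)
Your proof is correct and follows essentially the same route as the paper: the same Leibniz-identity computation showing $[j,[c,x]]=[[j,c],x]-[[j,x],c]=0$, and the same linear map $k\mapsto [j,k]$ with kernel $C$ for the codimension bound. The only (harmless) difference is that for the left multiplication you apply the Leibniz identity directly to $[j,[x,c]]$, whereas the paper deduces it from $[j,[c,x]]=0$ together with $[c,x]+[x,c]\in I$ and the fact that $I$ is left-annihilated; both give the same one-line conclusion.
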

\begin{proof} Let $c\in C$, $y\in L$. Then $[j,[c,y]]=[[j,c],y]-[[j,y],c]=0$, since $[j,y]\in J$. Also $[j,[y,c]]=0$, since $[c,y]+[y,c]\in I$, and $[c,y]+[y,c]\in K$.
\par

Now let $J=Fj$. Define $\theta : K \rightarrow J$ by $\theta(k) = [j,k]$. Then Ker$(\theta)= C$ and so $K/C \cong$ Im$(\theta)$ where Im$(\theta) \subseteq J$.
\end{proof}
\medskip

If $U$ is a subalgebra of $L$ the {\em core} of $U$ in $L$, $U_L$, is the largest ideal of $L$ inside $U$.

\begin{theor}\label{solvcodim2} Let $L$ be a solvable Leibniz algebra of dimension $n$ over a field $F$ of characteristic different from $2$ with $\alpha(L)=n-2$. Then one of the following occurs:
\begin{itemize}
\item[(i)] $\beta(L)=n-2$;
\item[(ii)] $L=L_1\dot{+}A$, where $A$ is maximal subalgebra of $L$ that is abelian, $L_1=Fx+Fy$ is a two-dimensional subspace that is not an abelian subalgebra, $[L_1,A]=L_1$, $L^2=L_1+F[x,y]+I$, $L^{(2)}+I=\phi(L)= Z(L)\cap L^2=F[x,y]+I$ and $L^2/(L^{(2)}+I)$ is a chief factor of $L$. In this case $\beta(L)\leq n-3$.
\item[(iii)] $A$ has codimension one in the nilradical, $N$, of $L$, which itself has codimension one in $L$. Moreover, if $L=N+Fx$ and $N=A+Fe_1$ then $N^2\subseteq F[e_1,e_2]+Fe_1^2 \subseteq A$ where $e_1=[x,e_2]$ or $e_1=[e_2,x]$ and $e_2\in A$, $Z(N)$ is an abelian ideal of maximal dimension and $\beta(L)=n-3$. Moreover, $I\subseteq Z(N)$.
\end{itemize}
\end{theor}
\begin{proof} Suppose that (i) does not hold and that $A$ is an abelian subalgebra of $L$ of dimension $n-2$.
\par

\noindent {\bf Case 1}: Suppose that $A$ is a maximal subalgebra of $L$. Then $A$ is a Cartan subalgebra of $L$ and (ii) (a) or (ii) (b) of Theorem \ref{solv} hold.
\par

{\bf Case 1 (a)}: Suppose that Theorem \ref{solv} (ii) (a) holds. Let $L=A\dot{+} L_1$ be the Fitting decomposition of $L$ relative to $A$. Then $L_1\subseteq L^2=I$ and $\dim L_1=2$. But $L_1$ is abelian, and $[L_1,A]\subseteq L_1$, $[A,L_1]\subseteq [L,I]=0$, so $L_1$ is an ideal of $L$. Now $L^2\subseteq L_1\subseteq L^2$ so $L^2$ is a two-dimensional minimal ideal of $L$ over which $L$ splits.
\par

Let $\theta : L \rightarrow$ Der($L^2$) be defined by $\theta(x)=R_x|_{L^2}$ for all $x\in L$. Then $\theta$ is a homomorphism with kernel $C_L^r(L^2)$, the right centraliser in $L$ of $L^2$, which is an ideal of $L$, by Lemma \ref{cent}. It follows that $L/C_L^r(L^2)\cong D$, where $D$ is an abelian subalgebra of Der$L^2\cong gl(2,F)$. It follows from Schur's theorem on commuting matrices (see \cite{jac}) that $\dim L/C_L^r(L^2)\leq [2^2/4]+1=2$. But then $C_L^r(L^2)=L^2\oplus (A\cap C_L^r(L^2))$ is an abelian ideal of dimension $n-2$, since $[A\cap C_L^r(L^2),L^2]\subseteq [L,I]=0$. We therefore have (i), a contradiction.
\par

{\bf Case 1 (b)}:  Suppose that Theorem \ref{solv} (ii) (b) holds. Let $L=A\dot{+} L_1$ be the Fitting decomposition of $L$ relative to $A$. Then $L^{(2)}+I=\phi(L)\subseteq A$ and $(L^{(2)}+I)\dot{+}L_1\subseteq L^2$. If $S$ is a subalgebra of $L$, denote by $\overline{S}$ its image under the canonical homomorphism onto $L/(L^{(2)}+I)$. Then, as in Theorem \ref{solv}, $\overline{L_1}=\overline{L^2}$, so $L^2=L_1\dot{+}(L^{(2)}+I)$. Put $L_1=Fx+Fy$. Then
\[ L^2=Fx+Fy+F[x,y]+F[y,x]+Fx^2+Fy^2+I=Fx+Fy+F[x,y]+I,
\] and $L^{(2)}+I=F[x,y]+I$. If $L_1$ is an abelian subalgebra then it is a right ideal and $C_L^r(L_1)$ is an abelian ideal of dimension $n-2$, a contradiction. We thus have case (ii).
\par

Let $C$ be a maximal abelian ideal of $L$. Then $L^{(2)}+I\subseteq Z(L)\subseteq C$, but $L^2\not\subseteq C$ and $L^2\not\subseteq C$ since $L_1$ is not abelian.Thus $L^2\cap C= L^{(2)}+I$. If $\dim C=n-2$, then
\begin{align*}
\dim (L^2+C)&=\dim L^2+ \dim C - \dim C\cap L^2 \\
&=2+\dim (L^{(2)}+I) + n-2- \dim (L^{(2)}+I)=n.
\end{align*} It follows that $L=L^2+C$. But now $L^2\subseteq L^{(2)}+(L^2\cap C)=L^{(2)}+I\subseteq C$, a contradiction. Hence $\beta(L)\leq n-3$.

{\bf Case 2}: So suppose now that $A$ is not a maximal subalgebra of $L$. Then $A\subset M\subset L$ where $M$ is a subalgebra with $\dim M=n-1$. We may assume that $A$ is an ideal of $M$, by Theorem \ref{solvcodim1}.
\par

{\bf Case 2 (a)}: Suppose that $A$ does not act nilpotently on $L$. Then $L=L_1\dot{+} M$ is the Fitting decomposition of $L$ relative to $A$. Put $L_1=Fx$. Then $[L_1,A]=L_1$ so there exists $a\in A$ such that $[x,a]=x$, and $[x,M]\subseteq Fx$.
\par

(i) Suppose first that  $A+Fx^2=M$. Now $[x^2,a]=[x,[x,a]]+[[x,a],x]=2x^2$. But $[x^2,a]\in A$, which is a contradiction.
\par

(ii) Suppose now that $L=M+Fx^2$, so $x=m+\lambda x^2$ for some $m\in M$, $0\neq \lambda\in F$. Then $[x^2, M]\subseteq [x,[x,M]]+[[x,M],x]\subseteq Fx^2$, so $Fx^2$ is an ideal of $L$. Put $B=A+Fx^2$. Then $B$ is an ideal of $L$. It follows from Lemma \ref{cent} that $C=C_B^r(Fx^2)$ is an ideal of codimension at most one in $B$. Furthermore, if $c_1=a_1+\lambda x^2, c_2=a_2+\mu x^2\in C$, then $[c_1,c_2]=\mu[a_1,x^2]=0$, so $C$ is abelian and $\beta(L)=n-2$, contradiction.
\par

(iii) Finally, suppose that $x^2\in A$. Then $0=[x^2,a]=[x,[x,a]]+[[x,a],x]=2x^2$, so $x^2=0$, so $Fx$ is a right ideal of $L$. Put $B=Fx+A$. Then $[A,x]=[A,[x,a]]\subseteq [[A,x],a]\subseteq [M+Fx,a]\subseteq A+Fx$ which implies that $[M,x]=[M,[x,a]]\subseteq [[M,x],a]+[[M,a],x]\subseteq [M+Fx,a]+[A,x]\subseteq A+Fx$. It follows that $B$ is an ideal of $L$ and that $C=C_B^r(Fx)$ is an ideal of $L$ of codimension at most one in $B$. Let $c_1=a_1+\lambda x, c_2=a_2+\mu x\in C$. Then $[c_1,c_2]=\mu[a_1,x]$. Let $[a_1,x]=a_3+\nu x$. If $\nu=0$, then $[a_1,x]=[a_1,[x,a]]=[[a_1,x],a]=0$. If $\nu\not = 0$, then $\nu[a_1,x]=[a_1,[a_1,x]]=-[[a_1,x],a_1]=-\nu[x,a_1]=-\nu[x,c_1]=0$. Hence $C$ is abelian and $\beta(L)=n-2$, a contradiction.
\par

{\bf Case 2 (b)}: So suppose now that $A$ does act nilpotently on $L$. Then there is $k\geq 0$ such that $R_A^k(L)\not \subseteq M$ but $R_A^{k+1}(L)\subseteq M$. Let $x\in R_A^k(L)\setminus M$, so $L=M+Fx$ and $[x,A]\subseteq M$.
\par

Suppose that $M$ is not an ideal of $L$. Then $M_L$ has codimension $1$ in $M$, by \cite[Theorem 3.2]{qa}. Now $A\neq M_L$, since (i) does not hold, so $M=A+M_L$. We have $[A,x]\subseteq M+I$. If $I\subseteq M$ this gives $[A,x]\subseteq M$; if $I\not\subseteq M$ then $L=M+I$ and $[A,x]\subseteq [A, M+I] \subseteq M$ again. It follows that $M$ is an ideal of $L$.
\par

Let $N$ be the nilradical of $L$. If $N\subseteq A$ then $A\subseteq C_L(N)\subseteq N$, by Lemma \ref{nilrad}, so $N=A$, contradicting the fact that (i) does not hold. If $A\subseteq N$, then $N=L$ or we can assume that $N=M$. If $A\not\subseteq N$ and $N\not\subseteq A$ , then either $L=A+N$, in which case $L$ is nilpotent, or we can assume that $A+N=M$, in which case $M$ is a nilpotent ideal of $L$, and so $M=N$.
\par

If $L$ is nilpotent then $\beta(L)=n-2$, by Proposition \ref{nilpcodim2}. If not, then $N=A+Fe_1$ and $N^2\subseteq	F[e_1,e_2]+Fe_1^2 \subseteq A$ where $e_1=[x,e_2]$ or $e_1=[e_2,x]$, $e_2\in A$, and $\dim Z(N)=n-3$., by Lemma \ref{codim2}.  If $I\not \subseteq Z(N)$ then $I+Z(N)$ must be an abelian ideal of dimension $n-2$, a contradiction, so $I\subseteq Z(N)$.
\end{proof}

\begin{coro}\label{supsolvcodim2} Let $L$ be a supersolvable Leibniz algebra of dimension $n$ over a field $F$ of characteristic different from $2$ and let $\alpha(L)=n-2$. Then $\beta(L)=n-2$.
\end{coro}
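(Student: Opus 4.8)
The plan is to invoke Theorem~\ref{solvcodim2}. Since a supersolvable Leibniz algebra is solvable, one of the cases (i), (ii), (iii) of that theorem holds for $L$; case~(i) is precisely the desired conclusion $\beta(L)=n-2$, so it is enough to show that (ii) and (iii) cannot occur when $L$ is supersolvable. The single structural fact I would record first is that every chief factor of a supersolvable Leibniz algebra is one-dimensional. Indeed, let $0=L_0\subset L_1\subset\dots\subset L_n=L$ be a chain of ideals of $L$ with $\dim L_i=i$, and let $W$ be a minimal ideal of $L$; each $W\cap L_i$ is an ideal of $L$ contained in $W$, hence equals $0$ or $W$, so taking $i$ least with $W\cap L_i\neq 0$ we get $W\subseteq L_i$ and $W\cap L_{i-1}=0$, whence $\dim W=\dim(W+L_{i-1})-(i-1)\le 1$. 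Since quotients of supersolvable algebras are supersolvable, the same holds for any chief factor $A/B$ of $L$.

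With this, case~(ii) is impossible: there $L^{2}=L_{1}\dot{+}(L^{(2)}+I)$ with $L_{1}=Fx+Fy$ two-dimensional, so the chief factor $L^{2}/(L^{(2)}+I)$ has dimension $2$, contradicting the fact just proved.

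For case~(iii) the plan is to produce an abelian ideal of dimension $n-2$, contradicting the assertion $\beta(L)=n-3$ contained in that case. Recall its data: the nilradical $N=N(L)$ has codimension $1$ and is not abelian, $N^{2}\subseteq F[e_{1},e_{2}]+Fe_{1}^{2}\subseteq A$ with $e_{1}=[x,e_{2}]$ or $e_{1}=[e_{2},x]$ and $e_{2}\in A$, the centre $Z(N)$ is an ideal of $L$ of dimension $n-3$, and $I\subseteq Z(N)$. First I would check, using the Leibniz identity, the nilpotency of $N$, and the fact that $A$ is an abelian ideal of $N$ (as in the proof of Proposition~\ref{nilpcodim2}), that $[e_{1},e_{2}]$ and $e_{1}^{2}$ both lie in $Z(N)$; hence $N^{2}\subseteq Z(N)$, $N$ is $2$-step nilpotent, and $N/Z(N)$ is a two-dimensional abelian ideal of the supersolvable algebra $L/Z(N)$. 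By the fact above, $N/Z(N)$ contains a one-dimensional ideal of $L/Z(N)$; lifting it yields an ideal $P$ of $L$ with $Z(N)\subsetneq P\subsetneq N$, $\dim P=n-2$. Since $N/Z(N)$ has as a basis the images of $e_{1}$ and $e_{2}$, one may write $P=Z(N)+Fp$ with $p=e_{1}+\mu e_{2}+z$ for a suitable scalar $\mu$ and with $z\in Z(N)$ still free. As $Z(N)$ is abelian and central in $P$, $[P,P]=Fp^{2}$; expanding and using $e_{2}^{2}=0$ (because $e_{2}\in A$) gives $p^{2}=e_{1}^{2}+\mu([e_{1},e_{2}]+[e_{2},e_{1}])+z^{2}\in I\subseteq Z(N)$. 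The remaining point is to choose $z$ so that $p^{2}=0$; once that is done, $P$ is an abelian ideal of dimension $n-2$, contradicting $\beta(L)=n-3$, so (iii) cannot hold and $\beta(L)=n-2$.

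The hard part is case~(iii), and within it the step showing the generator $p$ of $P$ over $Z(N)$ can be taken with $p^{2}=0$. This is where the hypothesis of characteristic different from $2$ is used, by a manipulation of the Leibniz identity parallel to equation~(13) in the proof of Proposition~\ref{nilpcodim2}, and it must be carried out in the absence of nilpotency of $L$ and for both possibilities $e_{1}=[x,e_{2}]$ and $e_{1}=[e_{2},x]$ of Lemma~\ref{codim2}, as well as in the subcase $\dim N^{2}=2$.
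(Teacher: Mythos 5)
Your reduction to Theorem \ref{solvcodim2} and your elimination of case (ii) coincide with the paper's argument: supersolvability forces all chief factors to be one-dimensional, which is incompatible with the two-dimensional chief factor $L^2/(L^{(2)}+I)$ occurring in case (ii). Your outline for case (iii) is also the paper's route (use supersolvability to find an ideal $P$ of $L$ with $Z(N)\subset P\subset N$ and $\dim(P/Z(N))=1$, then show $P$ is abelian). But the decisive step is absent. Since $z\in Z(N)$ centralises $N$ and $z^2=0$, one has $[P,P]=Fp^2$ with $p^2=e_1^2+\mu([e_1,e_2]+[e_2,e_1])=e_1^2$; in particular $p^2$ does not depend on $z$ at all, so the freedom you invoke ("choose $z$ so that $p^2=0$") is illusory. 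What must actually be proved is $e_1^2=0$, and this is the only point of the corollary requiring work beyond quoting Theorem \ref{solvcodim2}; you explicitly defer it as "the remaining point" and "the hard part". Nor does the computation of Proposition \ref{nilpcodim2} transfer verbatim: there nilpotency gives $[e_1,x]\in Z(N)$, whereas here one must first use the fact that $P$ is an ideal of $L$, together with $[e_2,x]\equiv\mp e_1\pmod{Z(N)}$, to write $[e_1,x]=\delta e_1+\epsilon e_2+z'$ with $z'\in Z(N)$, and only then evaluate $[[e_1,e_2],x]$ and $[[e_2,e_1],x]$ in two ways each; comparing the results yields $\pm 2e_1^2=0$, whence $e_1^2=0$ in characteristic $\neq 2$. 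This is exactly the final display in the paper's proof, and without it the argument is incomplete.

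A secondary omission: by writing $p=e_1+\mu e_2+z$ you assume the $e_1$-component of the generator of $P$ over $Z(N)$ is nonzero. The one-dimensional ideal of $L/Z(N)$ inside $N/Z(N)$ could instead be $Fe_2+Z(N)$ (the paper's case $\lambda=0$); the conclusion still holds there, since $Fe_2+Z(N)$ is already an abelian ideal of dimension $n-2$ (as $e_2\in A$, $e_2^2=0$ and $Z(N)$ centralises $e_2$), but this case must be noticed and dispatched before normalising the generator.
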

\begin{proof} Let $L$ be supersolvable and suppose that case (i) of Theorem \ref{solvcodim2} does not occur. Suppose first that case (ii) occurs. Then $L=L^2+A$ and $L^2=Z(L)\cap L^2+Fx$ for some $x\in L^2$. It follows that $L=A+Fx$, since $Z(L)\subseteq A$, a contradiction.
\par

So, suppose that case (iii) occurs. Then $\dim Z(N)=n-3$. Since $L$ is supersolvable, there is an ideal $C\subset  N$ of $L$ with $\dim (C/Z(N)) = 1$. Then we can argue similarly to Proposition \ref{nilpcodim2}. Putting $C=F(\lambda e_1+\mu e_2)+Z(N)$ we have that $\lambda[e_1,x]\mp \mu e_1=\gamma(\lambda e_1+\mu e_2)+z$ for some $\gamma \in F$, $z\in Z(N)$. If $\lambda=0$ then $A$ is an ideal and $\beta(L)=n-2$,
\par

If not, then $[e_1,x]=\delta e_1+\epsilon e_2+z$ where $\delta, \epsilon \in F$, $z\in Z(N)$. As in Proposition \ref{nilpcodim2}, $[e_1,e_2]=-[e_2,e_1]$ which  yields that
\begin{align*}
 [[e_1,e_2],x]&=[e_1,[e_2,x]]+[[e_1,x],e_2]=\mp e_1^2+\delta [e_1,e_2] \hbox{ and } \\
\pm e_1^2-\delta [e_1,e_2]&=[[e_2,e_1],x]=[e_2,[e_1,x]]+[[e_2,x],e_1]=\delta [e_2,e_1]\mp e_1^2.
\end{align*}  Since $F$ has characteristic different from $2$ we have that $e_1^2=0$ and $C$ is an abelian ideal of dimension $n-2$, whence $\beta(L)=n-2$.

\end{proof}
\medskip

Note that all of the non-Lie Leibniz algebras described in Theorem \ref{solvcodim2} are in cases (i) and (iii), as the following result shows.

\begin{propo} The algebras of type (ii) in Theorem \ref{solvcodim2} are all Lie algebras. (Here we are retaining the assumption that the underlying field has characteristic different from $2$.)
\end{propo}
\begin{proof} If $[x,A]\subseteq Fx$ then $Fx+L^{(2)}+I/(L^{(2)}+I)$ is a proper non-trivial ideal of $L^2/(L^{(2)}+I)$, contradicting the fact that the latter is a chief factor of $L$. Hence there exists an $a\in A$ such that $x$ and $[x,a]$ are linearly independent. Put $y=[x,a]$, so $L_1=Fx+Fy$. Now $[L_1,A]=L_1$, so there exist $u\in L_1, a_1\in A$ such that $x=[u,a_1]$. Then, for any $a_2\in A$,
\[
[a_2,x]=[a_2,[u,a_1]]=[[a_2,u],a_1] \hbox{ and } [x,a_2]=[[u,a_1],a_2]=[[u,a_2],a_1].
\] But $[[a_2,u]+[u,a_2],a_1]=0$, so $[a_2,x]=-[x,a_2]$. Also,
\[ [a_2,y]=[a_2,[x,a]]=[[a_2,x],a]=-[[x,a_2],a]=-[[x,a],a_2]=-[y,a_2],
\] so elements of $L_1$ anti-commute with elements of $A$.
\par

Next we have $[x,y]=[x,[x,a]]=-[[x,a],x]=-[y,x]$, so it remains to examine $x^2$ and $y^2$. Using the fact that $L_1^2\subseteq L^{(2)}\subseteq Z(L)$ we have
\begin{align}
y^2&=[y,[x,a]]=-[[y,a],x] \hbox{ and } \\
y^2&=-[[a,x],y]=-[[a,y],x]=[[y,a],x]
\end{align} Adding (1) and (2) gives $y^2=0.$
Similarly,
\begin{align}
x^2&=[x,[u,a_1]]=-[[x,a_1],u] \hbox{ and } \\
x^2&=-[[a_1,u],x]=-[[a_1,x],u]=[[x,a_1],u]
\end{align} Adding (3) and (4) gives $x^2=0$
\end{proof}
\medskip

Here, we show examples of Theorem 3.5 for cases $(ii)$ and $(iii)$.

\begin{ex} Let $L=Fe_1+F e_2+F e_3+F e_4$ and non-zero products
\[
[e_1,e_2] = -[e_2,e_1] = e_3, [e_1,e_3] =-[e_3,e_1]= - e_2, [e_2,e_3]=-[e_3,e_2] =- e_4.
\]
 This is a Lie algebra over the real number field $F$ and is as described in Theorem \ref{solvcodim2} (ii).  For $L^2 = Fe_2 + Fe_3 + Fe_4$, $I=L^{(2)}=Z(L)=\phi(L)=Fe_4$, and $L^2/(L^{(2)}+I)$ is a two-dimensional chief factor of $L$. This algebra has $\alpha(L) = 2$ and $\beta(L) = 1$. We could take $A = Fe_1 + Fe_4$, $L_1=Fe_2+Fe_3$ and $Fe_4$ is the unique maximal abelian ideal of $L$.
\end{ex}

\begin{ex}
Let $L=Fe_1+Fe_2+Fe_3+Fe_4+Fe_5$ where $[e_1,e_2]=-[e_2,e_1]=e_3,$  $[e_1,e_3]=-[e_3,e_1]=-e_2$,  $[e_2,e_3]=-[e_3,e_2]=e_4$, $[e_1,e_5]=e_4$
and all other products zero. This is a non-Lie Leibniz algebra over the real number field $F$ and is as described in Theorem \ref{solvcodim2} (iii).
Then $A=Fe_3+Fe_4+Fe_5$ is an abelian subalgebra of maximal dimension, $N=Fe_2+Fe_3+Fe_4+Fe_5$ is the nilradical of $L$, $N^2=Fe_4 \subset A$ and $Z(N)=Fe_4+Fe_5$ is the unique abelian ideal of maximal dimension.
\end{ex}

\section{Abelian subalgebras in filiform Leibniz algebras}

%
%

\begin{lemma}\label{lemmafiliform}

Let $L$ be an $n$-dimensional $p$-filiform Leibniz algebra. Then there exists a basis $e_1,\ldots,e_n$ of $L$ such that $[e_i,e_1]=e_{i+1}$, $\forall p+1 \leq i \leq n-1$.

\end{lemma}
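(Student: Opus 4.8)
\emph{Strategy and setup.} The plan is to produce a single element $e_1\in L$ along which right multiplication $R_{e_1}$ pushes every term of the lower central series down by exactly one step, and then to pick $e_{p+1}$ outside $L^2$ and let $e_{p+2},\dots,e_n$ be its $R_{e_1}$-orbit. The hypothesis $\dim L^i=n-p-i+1$ for $2\le i\le n-p+1$ says that $L$ is nilpotent of class $n-p$, that $\dim(L/L^2)=p+1$, and that the flag
\[ L=L^1\supsetneq L^2\supsetneq\cdots\supsetneq L^{n-p}\supsetneq L^{n-p+1}=0 \]
has all its steps from $L^2$ onwards one-dimensional. I shall use two elementary facts: $R_x(L^i)=[L^i,x]\subseteq[L^i,L]=L^{i+1}$ for every $x\in L$; and $[L^i,L^2]\subseteq L^{i+2}$, obtained from the Leibniz identity $[a,[b,c]]=[[a,b],c]-[[a,c],b]$ with $a\in L^i$, $b,c\in L$. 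In particular right multiplication by an element of $L^2$ moves the flag down at least two steps, so $e_1$ must come from outside $L^2$.

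\emph{Choosing $e_1$.} For $1\le m\le n-p-1$ put $W_m=\{x\in L:[L^m,x]\subseteq L^{m+2}\}$. Each $W_m$ is a subspace of $L$ that contains $L^2$, and it is proper: since $L^{m+1}=[L^m,L]$ while $[L^m,L^2]\subseteq L^{m+2}$, if every basis vector outside $L^2$ lay in $W_m$ we would get $L^{m+1}\subseteq L^{m+2}$, contradicting the one-dimensionality of $L^{m+1}/L^{m+2}$. I then choose $e_1\in L\setminus\bigcup_{m=1}^{n-p-1}W_m$; as $L^2\subseteq W_1$ this forces $e_1\notin L^2$. Over an infinite field such an $e_1$ exists because a vector space is not a finite union of proper subspaces (it suffices to work in $L/L^2$, where each $W_m$ has proper image); over an arbitrary field this step requires a separate argument.

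\emph{Building the basis.} Because $e_1\notin W_1$ there is an element $e_{p+1}\notin L^2$ with $[e_{p+1},e_1]\notin L^3$: if $p=0$ one takes $e_{p+1}=e_1$, since $e_1\notin W_1$ and $L=Fe_1+L^2$ force $e_1^2\notin L^3$; if $p\ge1$ one may in addition take $e_{p+1}\notin Fe_1+L^2$, because a vector space is never the union of the two proper subspaces $\ker(x\mapsto[x,e_1]+L^3)$ and $Fe_1+L^2$. Now put $e_{j+1}:=[e_j,e_1]$ for $j=p+1,\dots,n-1$. An induction on $k$, using $e_1\notin W_k$ and $L^k=Fe_{p+k}+L^{k+1}$, shows that $e_{p+1+k}$ generates $L^{k+1}/L^{k+2}$ for $1\le k\le n-p-1$; hence $e_{p+2},\dots,e_n$ is a basis of $L^2$. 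Adjoining $e_1,e_{p+1}$, and vectors $e_2,\dots,e_p$ whose images complete those of $e_1,e_{p+1}$ to a basis of $L/L^2$, yields a basis $e_1,\dots,e_n$ of $L$ with $[e_i,e_1]=e_{i+1}$ for $p+1\le i\le n-1$.

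\emph{Main obstacle.} The delicate point is the construction of $e_1$: to find one ``regular'' element along which right multiplication descends the whole central flag one step at a time. Over an infinite field this is routine, but over an arbitrary field it needs extra care and is presumably where the real work lies; once $e_1$ is fixed, the rest is the bookkeeping sketched above.
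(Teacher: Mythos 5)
Your proof is correct (modulo the small-field caveat you flag yourself) and rests on the same underlying idea as the paper's: find a single element $e_1$ along which right multiplication carries each $L^k$ onto $L^{k+1}$ modulo $L^{k+2}$, then take the $R_{e_1}$-orbit of a suitable $e_{p+1}\notin L^2$. The difference lies in how that $e_1$ is produced. The paper picks a graded basis, uses $e_{p+2}\in L^2\setminus L^3$ to arrange $[e_{p+1},e_1]=e_{p+2}$, and then ``redefines $e_1$ as a suitable linear combination of $e_1,\ldots,e_{p+1}$'' at each subsequent step --- an iterative genericity argument left implicit. You instead isolate all the genericity in one avoidance statement: each $W_m=\{x\in L : [L^m,x]\subseteq L^{m+2}\}$ is a proper subspace containing $L^2$ (your appeal to $[L^m,L^2]\subseteq L^{m+2}$, via the Leibniz identity, is exactly the right tool), and $e_1$ is chosen outside their union; the ensuing induction on $k$ is then clean and complete. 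What this buys is transparency about the one real hypothesis needed, namely that $L/L^2$ is not a union of $n-p-1$ proper subspaces, which holds whenever $|F|\geq n-p-1$ and in particular over any infinite field. The gap you honestly flag for small finite fields is genuine, but note that it afflicts the paper's own proof in equal measure: the successive redefinitions of $e_1$ there also require choosing scalars avoiding finitely many bad values, and the paper neither performs nor acknowledges that verification. So your proposal is not weaker than the published argument; it is the same strategy executed more carefully, with its field-size dependence made explicit.
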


\begin{proof}
In case of null-filiform Leibniz algebras, there exists a basis $e_1,\ldots,e_n$ of $L$ such that
\[ [e_i,e_1]=e_{i+1}, \quad [e_i,e_j]=0, \quad 1 \leq i \leq n-1, \, j \geq 2
\]
This was proved in \cite[Theorem 3.2]{book}. Moreover, this type of algebras  are mutually isomorphic in each dimension (see \cite{ao}).
Let $L$ be a $p$-filiform Leibniz algebra with $p \geq 1$. Then we can choose a basis $e_1,\ldots,e_n$ of $L$ such that
$$e_1, \ldots, e_{p+1} \in L \setminus L^2 \quad {\rm and} \quad e_{p+i} \in L^{i} \setminus L^{i+1}, \,\, {\rm for} \,\,2 \leq i \leq n-p$$
Since $e_{p+2} \in L^2 \setminus L^3$, $\exists e_i, e_j \in L$ such that $[e_i,e_j]=a_{ij} e_{p+2}$, where $1 \leq i,j \leq n$ and $a_{ij} \neq 0$. We can suppose, without loss of generality, that $i=p+1, j=1$ and $a_{ij}=1$. In this way, we get $[e_{p+1},e_1]=e_{p+2}$.
We conclude the proof by following this procedure for every vector $e_{p+i}$, for $i=3, \ldots, n-p-1$ and redefining $e_1$ as a suitable linear combination of $e_1,\ldots,e_{p+1}$.

\end{proof}

\begin{propo}
Let $L$ be an $n$-dimensional $k$-abelian $p$-filiform Leibniz algebra. Then $L^k$ is the unique abelian ideal of maximal dimension and, therefore, $\alpha(L)=\beta(L)=n-k+1$

\end{propo}

\begin{proof}
According to Lemma \ref{lemmafiliform}, there exists a basis $e_1,\ldots,e_n$ of $L$ such that
\[ [e_i,e_1]=e_{i+1}, \quad p+1 \leq i \leq n-1
\]
In this case, $L^k=span\{e_{p+k},\ldots,e_n\}$ for $2 \leq k \leq n-p$. Let us suppose that $A$ is an abelian ideal of $L$ which is not contained in $L^k$. There is an $e=\sum_{i=1}^n \alpha_i e_i \in A$ but $e \notin L^k$. Consequently, there exists $ j$ with $1 \leq j < p+k$ and $\alpha_j \neq 0$. We choose the minimal $j$ satisfying that condition. Then,
$$R_{e_1} (e)=[e,e_1]=\alpha_j e_{j+1} + \sum_{i=j+1}^{n-1} \alpha_i e_{i+1} \in A$$
In fact,
$$R^{\ell}_{e_1} (e)=\alpha_j e_{j+\ell} + \sum_{i=j+1}^{n-\ell} \alpha_i e_{i+\ell} \in A, \quad \forall \, 0 \leq \ell \leq n-j$$
It follows that $\alpha_j e_{j+\ell} \in A$, for $0 \leq \ell \leq n-j$. Since $j < k$ and $\alpha_j e_j \in A$, we conclude that $L^{k-1} \subset A$ and, therefore, $A$ is not abelian, which is a contradiction.
\end{proof}

\begin{ex}
Filiform Leibniz algebras were classified in \cite[Theorem 2.5]{GO} by families $\mu_1^{\overline{\alpha},\theta}, \mu_2^{\overline{\beta},\gamma}$ and
$\mu_3^{\alpha,\beta,\gamma}$. In case of $L=\mu_1^{\overline{\alpha},\theta}$ or $L=\mu_2^{\overline{\beta},\gamma}$, we have that $L^2$ is the unique abelian ideal of maximal dimension. If $L=\mu_3^{\alpha,\beta,\gamma}$, then $L^{n-3}=Fe_{n-2}+Fe_{n-1}+Fe_n$ is the unique abelian ideal of maximal dimension.
\end{ex}

\section{Tables of $\alpha$ and $\beta$ for Leibniz algebras}

The values of $\alpha$ and $\beta$ invariants were already obtained for Leibniz algebras up to dimension $4$ in \cite[Proposition 4.1]{C}. and \cite[Propositions 1-3]{CNT} by using the classifications given in \cite{A,Canete,CILL}. In Tables \ref{alphaybetaleibnizdim5_nilrad4_I}-\ref{alphaybetaleibnizdim5_nilrad3_II}, we have computed the value of those invariants for $5$-dimensional solvable Leibniz algebras using the classifications given in \cite{KS,KRH}. Finally, Tables \ref{alphaybetaleibnizdim6nilpotentI}-\ref{alphaybetaleibnizdim6nilpotentV} contain the calculations for $6$-dimensional nilpotent Leibniz algebras with a $4$-dimensional derived algebra, whose classification can be found in \cite{Demir}.

\begin{table}[htp] \caption{$5$-dimensional solvable Leibniz algebra with $4$-dimensional nilradical (I).}
\label{alphaybetaleibnizdim5_nilrad4_I}
\begin{center}
\begin{tabular}{|c|c|c|c|}
\hline
$L$ & Products & $\alpha(L)$ & $\beta(L)$ \\
\hline
$L_{1}^{\alpha}$ & \begin{tabular}{c}
$[e_1,e_1]=e_4, [e_1,e_5]=e_1-\alpha e_2, [e_5,e_1]=-e_1+\alpha e_2$ \\
$[e_1,e_2]=\alpha e_4, [e_2,e_5]=\alpha e_1+e_2, [e_5,e_2]=-\alpha e_1-e_2$\\
$[e_2,e_1]=-\alpha e_4, [e_3,e_5]=e_3, [e_5,e_3]=-e_3$ \\
$[e_2,e_2]=e_4, [e_4,e_5]=2e_4, [e_3,e_3]=e_4$
\end{tabular}  & $2$  & $2$  \\
\hline
$L_{2}$ & \begin{tabular}{c}
$[e_1,e_1]=e_4, [e_1,e_5]=e_1-i e_2, [e_5,e_1]=-e_1+i e_2$ \\
$[e_1,e_2]=i e_4, [e_2,e_5]=i e_1+e_2, [e_5,e_2]=-i e_1-e_2+e_4$\\
$[e_2,e_1]=-i e_4, [e_3,e_5]=e_3, [e_5,e_3]=-e_3$ \\
$[e_2,e_2]=e_4, [e_4,e_5]=2e_4, [e_3,e_3]=e_4$
\end{tabular}  & $2$  & $2$  \\
\hline
$L_{3}$ & \begin{tabular}{c}
$[e_1,e_2]=e_4, [e_1,e_5]=e_1- e_2, [e_5,e_1]=-e_1+e_2$ \\
$[e_1,e_3]=e_4, [e_2,e_5]=e_2+e_3, [e_5,e_2]=-e_2-e_3$\\
$[e_2,e_1]=- e_4, [e_3,e_5]=e_3, [e_5,e_3]=-e_3$ \\
$[e_2,e_2]=e_4, [e_4,e_5]=2e_4, [e_3,e_3]=e_4$
\end{tabular}  & $2$  & $1$  \\
\hline
$L_{4}^{\alpha}$ & \begin{tabular}{c}
$[e_1,e_5]=e_1, [e_5,e_1]=-e_1, [e_1,e_2]=e_3$ \\
$[e_2,e_5]=\alpha e_2,[e_5,e_2]=-\alpha e_2, [e_2,e_1]=e_4$\\
$[e_3,e_5]=(1+\alpha)e_3, [e_5,e_3]=-e_3+\alpha e_4,$ \\
$ [e_4,e_5]=(1+\alpha)e_4, [e_5,e_4]=e_3-\alpha e_4$
\end{tabular}  & $3$  & $3$  \\
\hline
$L_{5}$ & \begin{tabular}{c}
$[e_1,e_2]=e_3, [e_1,e_5]=e_1+e_3, [e_5,e_1]=-e_1+e_4$ \\
$[e_2,e_1]= e_4,[e_3,e_5]=e_3, [e_5,e_3]=-e_3,$ \\
$ [e_4,e_5]=e_4, [e_5,e_4]=e_3$
\end{tabular}  & $3$  & $3$  \\
\hline
$L_{6}$ & \begin{tabular}{c}
$[e_1,e_1]=e_4, [e_1,e_5]=e_1+e_2, [e_2,e_5]=e_2$ \\
$[e_1,e_2]= e_3,[e_5,e_1]=-e_1-e_2, [e_5,e_2]=-e_2,$ \\
$[e_2,e_1]=-e_3, [e_3,e_5]=2e_3,$ \\ $[e_5,e_3]=-2e_3, [e_4,e_5]=2e_4$
\end{tabular}  & $3$  & $3$  \\
\hline
$L_{7}^{\alpha}$ & \begin{tabular}{c}
$[e_1,e_1]=e_4, [e_1,e_5]=e_1, [e_5,e_1]=-e_1, [e_4,e_5]=2e_4$ \\
$[e_1,e_2]= e_3,[e_2,e_5]=\alpha e_2, [e_3,e_5]=(1+\alpha)e_3,$ \\
$[e_2,e_1]=-e_3, [e_5,e_2]=-\alpha e_2, [e_5,e_3]=-(1+\alpha)e_3$
\end{tabular}  & $3$  & $3$  \\
\hline
$L_{8}$ & \begin{tabular}{c}
$[e_1,e_1]=e_4, [e_2,e_5]=e_2, [e_5,e_1]=-e_1$, \\
$[e_1,e_2]= e_3,[e_3,e_5]=e_3, [e_5,e_3]=-e_3$, \\
$[e_2,e_1]=-e_3$
\end{tabular}  & $3$  & $3$  \\
\hline
$L_{9}^{\alpha}$ & \begin{tabular}{c}
$[e_1,e_1]=e_4, [e_2,e_5]=e_2, [e_5,e_1]=\alpha e_4, $ \\
$[e_1,e_2]=e_3, [e_3,e_5]= e_3, [e_5,e_2]=- e_2,$ \\
$[e_2,e_1]=-e_3, [e_5,e_3]=-e_3$
\end{tabular}  & $3$  & $3$  \\
\hline
\end{tabular}
\end{center}
\end{table}

\begin{table}[htp] \caption{$5$-dimensional solvable Leibniz algebra with $4$-dimensional nilradical (II).}
\label{alphaybetaleibnizdim5_nilrad4_II}
\begin{center}
\begin{tabular}{|c|c|c|c|}
\hline
$L$ & Products & $\alpha(L)$ & $\beta(L)$ \\
\hline
$L_{10}^{\alpha}$ & \begin{tabular}{c}
$[e_1,e_1]=e_4, [e_2,e_5]=e_2, [e_5,e_1]=\alpha e_4, $ \\
$[e_1,e_2]=e_3, [e_3,e_5]= e_3, [e_5,e_2]=-e_2,$ \\
$[e_2,e_1]=-e_3, [e_5,e_5]=e_4  [e_5,e_3]=-e_3$
\end{tabular}  & $3$  & $3$  \\
\hline
$L_{11}^{\alpha,\delta}$ & \begin{tabular}{c}
$[e_1,e_1]=e_4, [e_2,e_5]=e_2+e_3, [e_5,e_1]=\alpha e_4, $ \\
$[e_1,e_2]=e_3, [e_3,e_5]= e_3, [e_5,e_2]=-e_2-e_3,$ \\
$[e_2,e_1]=-e_3, [e_5,e_5]=\delta e_4  [e_5,e_3]=-e_3$
\end{tabular}  & $3$  & $3$  \\
\hline
$L_{12}$ & \begin{tabular}{c}
$[e_1,e_1]=e_3, [e_2,e_5]=e_2, [e_5,e_2]=-e_2, $ \\
$[e_1,e_2]=e_4,  [e_4,e_5]=e_4$
\end{tabular}  & $3$  & $3$  \\
\hline
$L_{13}$ & \begin{tabular}{c}
$[e_1,e_1]=e_3, [e_2,e_5]=e_2, [e_5,e_2]=-e_2, $ \\
$[e_1,e_2]=e_4,  [e_4,e_5]=e_4, [e_5,e_5]=e_3$
\end{tabular}  & $3$  & $3$  \\
\hline
$L_{14}^{\alpha}$ & \begin{tabular}{c}
$[e_1,e_1]=e_3, [e_2,e_5]=e_2, [e_5,e_1]=-e_3, $ \\
$[e_1,e_2]=e_4,  [e_4,e_5]=e_4, [e_5,e_2]=-e_2, [e_5,e_5]=\alpha e_3$
\end{tabular}  & $3$  & $3$  \\
\hline
$L_{15}$ & \begin{tabular}{c}
$[e_1,e_2]=e_4, [e_1,e_5]=e_1, [e_5,e_1]=-e_1, $ \\
$[e_2,e_1]=e_4,  [e_2,e_5]=e_2, [e_5,e_2]=-e_2,$ \\
$[e_2,e_2]= e_4, [e_3,e_5]=2e_3, [e_4,e_5]=2e_4$
\end{tabular}  & $3$  & $3$  \\
\hline
$L_{16}$ & \begin{tabular}{c}
$[e_1,e_2]=e_4, [e_1,e_5]=e_1, [e_5,e_1]=-e_1, $ \\
$[e_2,e_1]=-e_4,  [e_2,e_5]=-e_2, [e_5,e_2]=e_2,$ \\
$[e_3,e_3]= e_4, [e_3,e_5]=e_4$
\end{tabular}  & $2$  & $2$  \\
\hline
$L_{17}$ & \begin{tabular}{c}
$[e_1,e_2]=e_4, [e_1,e_5]=e_1, [e_5,e_1]=-e_1, $ \\
$[e_2,e_1]=-e_4,  [e_2,e_5]=-e_2, [e_5,e_2]=e_2,$ \\
$[e_3,e_3]= e_4, [e_3,e_5]=e_4, [e_5,e_3]=e_4$
\end{tabular}  & $2$  & $2$  \\
\hline
$L_{18}^{\alpha}$ & \begin{tabular}{c}
$[e_1,e_2]=e_4, [e_1,e_5]=e_1, [e_5,e_1]=-e_1, $ \\
$[e_2,e_1]=-e_4,  [e_2,e_5]=-e_2, [e_5,e_2]=e_2,$ \\
$[e_3,e_3]= e_4, [e_3,e_5]=e_4, [e_5,e_3]=\alpha e_4,[e_5,e_5]=e_4$
\end{tabular}  & $2$  & $2$  \\
\hline
\end{tabular}
\end{center}
\end{table}

\begin{table}[htp] \caption{$5$-dimensional solvable Leibniz algebra with $3$-dimensional nilradical (I).}
\label{alphaybetaleibnizdim5_nilrad3_I}
\begin{center}
\begin{tabular}{|c|c|c|c|}
\hline
$L$ & Products & $\alpha(L)$ & $\beta(L)$ \\
\hline
$L_{1}$ & \begin{tabular}{c}
$[e_1,e_2]=e_3, [e_2,e_1]=-e_3, [e_1,e_4]=e_1, $ \\
$[e_3,e_4]=e_3,  [e_4,e_1]=-e_1, [e_4,e_3]=-e_3,$ \\
$[e_5,e_2]= e_2, [e_3,e_5]=e_3,$ \\
$ [e_5,e_2]=-e_2,[e_5,e_3]=-e_3$
\end{tabular}  & $2$  & $2$  \\
\hline
$L_{2}$ & \begin{tabular}{c}
$[e_2,e_1]=e_3, [e_1,e_4]=e_1, [e_2,e_5]=e_2$ \\
$[e_4,e_1]=-e_1, [e_3,e_4]=e_3,[e_3,e_5]=e_3$
\end{tabular}  & $2$  & $2$  \\
\hline
$L_{3}$ & \begin{tabular}{c}
$[e_1,e_1]=e_3, [e_1,e_4]=e_1, [e_4,e_1]=-e_1$ \\
$[e_3,e_4]=2e_3, [e_2,e_5]=e_2$
\end{tabular}  & $2$  & $2$  \\
\hline
$L_{4}$ & \begin{tabular}{c}
$[e_1,e_1]=e_3, [e_1,e_4]=e_1, [e_4,e_1]=-e_1$ \\
$[e_3,e_4]=2e_3, [e_2,e_5]=e_2,[e_5,e_2]=-e_2$
\end{tabular}  & $2$  & $2$  \\
\hline
\begin{tabular}{c}$L_{5}^{\mu_1,\mu_2}$ \\ $(\mu_1\neq0)$ \end{tabular} & \begin{tabular}{c}
$[e_1,e_4]=e_1, [e_3,e_4]=\mu_1 e_3, [e_2,e_5]=e_2$ \\
$[e_3,e_5]=\mu_2 e_3, [e_4,e_1]=-e_1,[e_4,e_3]=-\mu_1 e_3$ \\
$[e_5,e_2]=-e_2, [e_5,e_3]=-\mu_2 e_3$
\end{tabular}  & $3$  & $3$  \\
\hline
$L_{6}^{\mu_1,\mu_2}$ & \begin{tabular}{c}
$[e_1,e_4]=e_1, [e_3,e_4]=\mu_1 e_3, [e_2,e_5]=e_2$ \\
$[e_3,e_5]=\mu_2 e_3, [e_4,e_1]=-e_1,[e_5,e_2]=-e_2$
\end{tabular}  & $3$  & $3$  \\
\hline
\begin{tabular}{c}$L_{7}^{\mu}$ \\ $(\mu \neq0)$ \end{tabular} & \begin{tabular}{c}
$[e_1,e_4]=e_1, [e_2,e_5]=e_2, [e_3,e_5]=\mu e_3,$ \\
$[e_5,e_2]=-e_2, [e_5,e_3]=-\mu e_3$
\end{tabular}  & $3$  & $3$  \\
\hline
$L_{8}^{\mu_1,\mu_2}$ & \begin{tabular}{c}
$[e_1,e_4]=e_1, [e_3,e_4]=\mu_1 e_3, [e_2,e_5]=e_2$ \\
$[e_3,e_5]=\mu_2 e_3, [e_5,e_2]=-e_2$
\end{tabular}  & $3$  & $3$  \\
\hline
$L_{9}^{\mu_1,\mu_2}$ & \begin{tabular}{c}
$[e_1,e_4]=e_1, [e_3,e_4]=\mu_1 e_3, [e_2,e_5]=e_2$ \\
$[e_3,e_5]=\mu_2 e_3$
\end{tabular}  & $3$  & $3$  \\
\hline
\begin{tabular}{c}$L_{10}^{\lambda_1,\lambda_2,\lambda_3,\lambda_4}$ \\
$\exists i \,\, | \,\, \lambda_i \neq 0$ \end{tabular} & \begin{tabular}{c}
$[e_1,e_4]=e_1, [e_2,e_5]=e_2,[e_4,e_1]=-e_1,$ \\
$[e_5,e_2]=-e_2, [e_4,e_4]=\lambda_1 e_3, [e_5,e_4]=\lambda_2 e_3$ \\
$[e_4,e_5]=\lambda_3e_3, [e_5,e_5]=\lambda_4 e_3$
\end{tabular}  & $3$  & $3$  \\
\hline
\begin{tabular}{c}$L_{11}^{\lambda_1,\lambda_2,\lambda_3,\lambda_4}$ \\
$\exists i \,\, | \,\, \lambda_i \neq 0$ \end{tabular} & \begin{tabular}{c}
$[e_1,e_4]=e_1, [e_2,e_5]=e_2,[e_5,e_2]=-e_2,$ \\
$[e_4,e_4]=\lambda_1 e_3, [e_5,e_4]=\lambda_2 e_3$ \\
$[e_4,e_5]=\lambda_3e_3, [e_5,e_5]=\lambda_4 e_3$
\end{tabular}  & $3$  & $3$  \\
\hline
\begin{tabular}{c}$L_{12}^{\lambda_1,\lambda_2,\lambda_3,\lambda_4}$ \\
$\exists i \,\, | \,\, \lambda_i \neq 0$ \end{tabular} & \begin{tabular}{c}
$[e_1,e_4]=e_1, [e_2,e_5]=e_2,[e_4,e_4]=\lambda_1 e_3,$ \\ $[e_5,e_4]=\lambda_2 e_3, [e_4,e_5]=\lambda_3e_3, [e_5,e_5]=\lambda_4 e_3$
\end{tabular}  & $3$  & $3$  \\
\hline
$L_{13}$ & \begin{tabular}{c}
$[e_1,e_4]=e_1, [e_2,e_5]=e_2,[e_3,e_4]= e_3,$ \\
$[e_4,e_1]=-e_1, [e_5,e_1]=-e_3$
\end{tabular}  & $3$  & $3$  \\
\hline
$L_{14}$ & \begin{tabular}{c}
$[e_1,e_4]=e_1, [e_3,e_4]=e_3,[e_4,e_1]= -e_1,$ \\
$[e_5,e_1]=e_3, [e_2,e_5]=e_2, [e_5,e_2]=-e_2$
\end{tabular}  & $3$  & $3$  \\
\hline
\end{tabular}
\end{center}
\end{table}

\begin{table}[htp] \caption{$5$-dimensional solvable Leibniz algebra with $3$-dimensional nilradical (II).}
\label{alphaybetaleibnizdim5_nilrad3_II}
\begin{center}
\begin{tabular}{|c|c|c|c|}
\hline
$L$ & Products & $\alpha(L)$ & $\beta(L)$ \\
\hline
$L_{15}$ & \begin{tabular}{c}
$[e_1,e_4]=e_1, [e_2,e_4]=e_2,$ \\
$[e_1,e_5]= e_2, [e_3,e_5]=e_3$
\end{tabular}  & $3$  & $3$  \\
\hline
$L_{16}$ & \begin{tabular}{c}
$[e_1,e_4]=e_1, [e_2,e_4]=e_2,$ \\
$[e_1,e_5]= e_2, [e_3,e_5]=e_3,[e_5,e_3]=-e_3$
\end{tabular}  & $3$  & $3$  \\
\hline
$L_{17}$ & \begin{tabular}{c}
$[e_1,e_4]=e_1, [e_2,e_4]=e_3,$ \\
$[e_1,e_5]= e_2, [e_3,e_5]=e_3,$ \\
$[e_4,e_1]=-e_1, [e_4,e_2]=-e_2, [e_5,e_1]=-e_2$
\end{tabular}  & $3$  & $3$  \\
\hline
$L_{18}$ & \begin{tabular}{c}
$[e_1,e_4]=e_1, [e_2,e_4]=e_2,$ \\
$[e_1,e_5]= e_2, [e_3,e_5]=e_3,[e_4,e_1]=-e_1,$ \\
$ [e_4,e_2]=-e_2, [e_5,e_1]=-e_2, [e_5,e_3]=-e_3$
\end{tabular}  & $3$  & $3$  \\
\hline
$L_{19}^{\mu}$ & \begin{tabular}{c}
$[e_1,e_4]=e_1+e_2, [e_2,e_4]=e_2,$ \\
$[e_1,e_5]=\mu e_2, [e_3,e_5]=e_3$
\end{tabular}  & $3$  & $3$  \\
\hline
$L_{20}^{\mu}$ & \begin{tabular}{c}
$[e_1,e_4]=e_1+e_2, [e_2,e_4]=e_2,$ \\
$[e_1,e_5]=\mu e_2, [e_3,e_5]=e_3, [e_5,e_3]=-e_3$
\end{tabular}  & $3$  & $3$  \\
\hline
$L_{21}^{\mu}$ & \begin{tabular}{c}
$[e_1,e_4]=e_1+e_2, [e_2,e_4]=e_2,$ \\
$[e_1,e_5]=\mu e_2, [e_3,e_5]=e_3$, \\
$[e_4,e_1]=-e_1-e_2, [e_4,e_2]=-e_2, [e_5,e_1]=-\mu e_2$
\end{tabular}  & $3$  & $3$  \\
\hline
$L_{22}^{\mu}$ & \begin{tabular}{c}
$[e_1,e_4]=e_1+e_2, [e_2,e_4]=e_2,$ \\
$[e_1,e_5]=\mu e_2, [e_3,e_5]=e_3, [e_5,e_3]=-e_3$, \\
$[e_4,e_1]=-e_1-e_2, [e_4,e_2]=-e_2, [e_5,e_1]=-\mu e_2$
\end{tabular}  & $3$  & $3$  \\
\hline
\end{tabular}
\end{center}
\end{table}

\begin{table}[htp] \caption{$6$-dimensional nilpotent Leibniz algebras with derived algebra of codimension $2$ (I).}
\label{alphaybetaleibnizdim6nilpotentI}
\begin{center}
\begin{tabular}{|c|c|c|c|}
\hline
$L$ & Products & $\alpha(L)$ & $\beta(L)$ \\
\hline
$L_{1}$ & \begin{tabular}{c}
$[e_1,e_1]=e_6, [e_1,e_2]=e_3=-[e_2,e_1]$ \\
$[e_1,e_3]= e_4=-[e_3,e_1],[e_2,e_3]=e_5=-[e_3,e_2]$
\end{tabular}  & $4$  & $4$  \\
\hline
$L_{2}$ & \begin{tabular}{c}
$[e_1,e_1]=e_6, [e_1,e_2]=e_3=-[e_2,e_1]$ \\
$[e_2,e_2]=e_6, [e_1,e_3]= e_4=-[e_3,e_1],$ \\
$[e_2,e_3]=e_5=-[e_3,e_2]$
\end{tabular}  & $4$  & $4$  \\
\hline
$L_{3}$ & \begin{tabular}{c}
$[e_1,e_1]=e_6, [e_1,e_2]=e_3=-[e_2,e_1]$ \\
$[e_1,e_3]= e_4=-[e_3,e_1],[e_1,e_4]=e_5=-[e_4,e_1]$
\end{tabular}  & $5$  & $5$  \\
\hline
$L_{4}$ & \begin{tabular}{c}
$[e_1,e_1]=e_6, [e_1,e_2]=e_3=-[e_2,e_1]$ \\
$[e_1,e_3]= e_4=-[e_3,e_1],[e_2,e_3]=e_5=-[e_3,e_2]$ \\
$[e_1,e_4]= e_5=-[e_4,e_1],$
\end{tabular}  & $4$   & $4$   \\
\hline
$L_{5}$ & \begin{tabular}{c}
$[e_1,e_1]=e_6, [e_1,e_2]=e_3=-[e_2,e_1]$ \\
$[e_2,e_3]=e_4=-[e_3,e_2],[e_2,e_4]=e_5=-[e_4,e_2]$
\end{tabular}  & $4$  & $4$   \\
\hline
$L_{6}$ & \begin{tabular}{c}
$[e_1,e_1]=e_6, [e_1,e_2]=e_3=-[e_2,e_1]$ \\
$[e_1,e_3]=e_5=-[e_3,e_1],[e_2,e_3]=e_4=-[e_3,e_2]$ \\
$[e_2,e_4]=e_5=-[e_4,e_2]$
\end{tabular}  & $4$  &  $4$  \\
\hline
$L_{7}$ & \begin{tabular}{c}
$[e_1,e_1]=e_6, [e_1,e_2]=e_3=-[e_2,e_1]$ \\
$[e_2,e_2]=e_6, [e_1,e_3]= e_4=-[e_3,e_1],$ \\
$[e_1,e_4]=e_5=-[e_4,e_1]$
\end{tabular}  & $4$  & $4$   \\
\hline
$L_{8}$ & \begin{tabular}{c}
$[e_1,e_1]=e_6, [e_1,e_2]=e_3=-[e_2,e_1]$ \\
$[e_2,e_2]=e_6, [e_1,e_3]= e_4=-[e_3,e_1],$ \\
$[e_2,e_3]=e_5=-[e_3,e_2], [e_1,e_4]=e_5=-[e_4,e_1]$ \\
\end{tabular}  & $4$  & $4$   \\
\hline
$L_{9}$ & \begin{tabular}{c}
$[e_1,e_1]=e_6, [e_1,e_2]=e_3=-[e_2,e_1]$ \\
$[e_2,e_2]=e_6, [e_1,e_3]= e_4=-[e_3,e_1],$ \\
$[e_2,e_3]=ie_4=-[e_3,e_2], [e_1,e_4]=e_5=-[e_4,e_1]$ \\
$[e_2,e_4]=i e_5=-[e_4,e_2]$
\end{tabular}  & $5$  & $5 $  \\
\hline
$L_{10}$ & \begin{tabular}{c}
$[e_1,e_1]=e_6, [e_1,e_2]=e_3=-[e_2,e_1]$ \\
$[e_2,e_2]=e_6, [e_1,e_3]= e_4=-[e_3,e_1],$ \\
$[e_2,e_3]=ie_4+e_5=-[e_3,e_2],$ \\ $[e_1,e_4]=e_5=-[e_4,e_1],
[e_2,e_4]=i e_5=-[e_4,e_2]$
\end{tabular}  & $ 4$  & $ 4$  \\
\hline
\end{tabular}
\end{center}
\end{table}

\begin{table}[htp] \caption{$6$-dimensional nilpotent Leibniz algebras with derived algebra of codimension $2$ (II).}
\label{alphaybetaleibnizdim6nilpotentII}
\begin{center}
\begin{tabular}{|c|c|c|c|}
\hline
$L$ & Products & $\alpha(L)$ & $\beta(L)$ \\
\hline
$L_{11}$ & \begin{tabular}{c}
$[e_1,e_1]=e_6, [e_1,e_2]=e_3=-[e_2,e_1]$ \\
$[e_1,e_3]= e_4=-[e_3,e_1],[e_2,e_3]=e_5=-[e_3,e_2]$ \\ $[e_1,e_4]=e_6=-[e_4,e_1]$
\end{tabular}  & $4$  & $4$  \\
\hline
$L_{12}$ & \begin{tabular}{c}
$[e_1,e_1]=e_6, [e_1,e_2]=e_3=-[e_2,e_1]$ \\
$[e_1,e_3]= e_5=-[e_3,e_1],[e_2,e_3]=e_4=-[e_3,e_2]$ \\ $[e_2,e_4]=e_6=-[e_4,e_2]$
\end{tabular}  & $4$  & $4$  \\
\hline
$L_{13}$ & \begin{tabular}{c}
$[e_1,e_1]=e_6, [e_1,e_2]=e_3=-[e_2,e_1]$ \\
$[e_2,e_2]=e_6, [e_1,e_3]= e_4=-[e_3,e_1],$ \\ $[e_2,e_3]=e_5=-[e_3,e_2],[e_1,e_4]=e_6=-[e_4,e_1]$
\end{tabular}  & $ 4$  & $4 $  \\
\hline
$L_{14}$ & \begin{tabular}{c}
$[e_1,e_1]=e_6, [e_1,e_2]=e_3=-[e_2,e_1]$ \\
$[e_2,e_2]=e_6, [e_1,e_3]= e_4=-[e_3,e_1],$ \\ $[e_2,e_3]=ie_4+e_5=-[e_3,e_2],$ \\ $[e_1,e_4]=e_6=-[e_4,e_1], [e_2,e_4]=i e_6=-[e_4,e_2]$
\end{tabular}  & $4$  & $4$  \\
\hline
$L_{15}$ & \begin{tabular}{c}
$[e_1,e_1]=e_6, [e_1,e_2]=e_3=-[e_2,e_1]$ \\
$[e_1,e_3]= e_4=-[e_3,e_1],[e_2,e_3]=e_6=-[e_3,e_2]$ \\ $[e_1,e_4]=e_5=-[e_4,e_1]$
\end{tabular}  & $4 $  & $4 $  \\
\hline
$L_{16}$ & \begin{tabular}{c}
$[e_1,e_1]=e_6, [e_1,e_2]=e_3=-[e_2,e_1]$ \\
$[e_1,e_3]= e_4=-[e_3,e_1],[e_2,e_3]=e_5+e_6=-[e_3,e_2]$ \\ $[e_1,e_4]=e_5=-[e_4,e_1]$
\end{tabular}  & $ 4$  & $4 $  \\
\hline
$L_{17}$ & \begin{tabular}{c}
$[e_1,e_1]=e_6, [e_1,e_2]=e_3=-[e_2,e_1]$ \\
$[e_1,e_3]= e_6=-[e_3,e_1],$
$[e_2,e_3]=e_4=-[e_3,e_2]$, \\$[e_2,e_4]=e_5=-[e_4,e_2]$
\end{tabular}  & $4$  & $4$  \\
\hline
$L_{18}$ & \begin{tabular}{c}
$[e_1,e_1]=e_6, [e_1,e_2]=e_3=-[e_2,e_1]$ \\
$[e_1,e_3]= e_5+e_6=-[e_3,e_1],$ \\
$[e_2,e_3]=e_4=-[e_3,e_2], [e_2,e_4]=e_5=-[e_4,e_2]$ \\
\end{tabular}  & $ 4$  & $4 $  \\
\hline
$L_{19}^{a}$ & \begin{tabular}{c}
$[e_1,e_1]=e_6, [e_1,e_2]=e_3=-[e_2,e_1]$ \\
$[e_2,e_2]=e_6, [e_1,e_3]= e_4=-[e_3,e_1],$ \\
$[e_2,e_3]=a e_5+e_6=-[e_3,e_2],$ \\
$ [e_1,e_4]=e_5=-[e_4,e_1]$
\end{tabular}  & $ 4$  & $4 $  \\
\hline
$L_{20}$ & \begin{tabular}{c}
$[e_1,e_1]=e_6, [e_1,e_2]=e_3=-[e_2,e_1]$ \\
$[e_2,e_2]=e_6, [e_1,e_3]= e_4=-[e_3,e_1],$ \\
$[e_2,e_3]=ie_4+e_6=-[e_3,e_2],$ \\ $[e_1,e_4]=e_5=-[e_4,e_1],
[e_2,e_4]=i e_5=-[e_4,e_2]$
\end{tabular}  & $ 4$  & $4$  \\
\hline
\end{tabular}
\end{center}
\end{table}

\begin{table}[htp] \caption{$6$-dimensional nilpotent Leibniz algebras with derived algebra of codimension $2$ (III).}
\label{alphaybetaleibnizdim6nilpotentIII}
\begin{center}
\begin{tabular}{|c|c|c|c|}
\hline
$L$ & Products & $\alpha(L)$ & $\beta(L)$ \\
\hline
$L_{21}$ & \begin{tabular}{c}
$[e_1,e_1]=e_6, [e_1,e_2]=e_3=-[e_2,e_1]$ \\
$[e_2,e_2]=e_6, [e_1,e_3]=e_4=-[e_3,e_1],$ \\
$[e_2,e_3]=ie_4+e_5+e_6=-[e_3,e_2]$ \\ $[e_1,e_4]=e_5=-[e_4,e_1]$ \\
$[e_2,e_4]=i e_5=-[e_4,e_2]$
\end{tabular}  & $4$  & $ 4$  \\
\hline
$L_{22}$ & \begin{tabular}{c}
$[e_1,e_1]=e_6, [e_1,e_2]=e_3=-[e_2,e_1]$ \\
$[e_1,e_3]= e_4=-[e_3,e_1],[e_2,e_3]=e_5=-[e_3,e_2]$ \\ $[e_2,e_4]=e_6=-[e_4,e_2], [e_1,e_5]=e_6=-[e_5,e_1]$
\end{tabular}  & $ 4$  & $ 4$  \\
\hline
$L_{23}^{a}$ & \begin{tabular}{c}
$[e_1,e_1]=e_6, [e_1,e_2]=e_3=-[e_2,e_1]$ \\
$[e_1,e_3]= e_4=-[e_3,e_1],[e_2,e_3]=e_5=-[e_3,e_2]$  \\ $[e_1,e_4]=e_6=-[e_4,e_1], [e_2,e_4]=a e_6=-[e_4,e_2],$  \\
$[e_2,e_5]=e_6=-[e_5,e_2]$
\end{tabular}  & $ 4$  & $4 $  \\
\hline
$L_{24}$ & \begin{tabular}{c}
$[e_1,e_1]=e_6, [e_1,e_2]=e_3=-[e_2,e_1]$ \\
$[e_2,e_2]=e_6, [e_1,e_3]= e_4=-[e_3,e_1],$ \\ $[e_2,e_3]=e_5=-[e_3,e_2],$ \\ $[e_1,e_5]=e_6=-[e_5,e_1], [e_2,e_4]= e_6=-[e_4,e_2]$
\end{tabular}  & $ 4$  & $4 $  \\
\hline
$L_{25}^{a,b}$ & \begin{tabular}{c}
$[e_1,e_1]=e_6, [e_1,e_2]=e_3=-[e_2,e_1]$ \\
$[e_2,e_2]=e_6, [e_1,e_3]= e_4=-[e_3,e_1],$ \\ $[e_2,e_3]=e_5=-[e_3,e_2],[e_1,e_4]= a e_6=-[e_4,e_1],$ \\ $[e_1,e_5]=b e_6=-[e_5,e_1],[e_2,e_5]=e_6=-[e_5,e_2]$
\end{tabular}  & $ 4$  & $4$  \\
\hline
$L_{26}$ & \begin{tabular}{c}
$[e_1,e_1]=e_6, [e_1,e_2]=e_3=-[e_2,e_1]$ \\
$[e_1,e_3]= e_4=-[e_3,e_1],[e_1,e_4]= e_5=-[e_4,e_1],$ \\ $[e_1,e_5]=e_6=-[e_5,e_1], $
\end{tabular}  & $ 5$  & $5 $  \\
\hline
$L_{27}$ & \begin{tabular}{c}
$[e_1,e_1]=e_6, [e_1,e_2]=e_3=-[e_2,e_1]$ \\
$[e_1,e_3]= e_4=-[e_3,e_1],[e_2,e_3]=e_6=-[e_3,e_2]$ \\$[e_1,e_4]= e_5=-[e_4,e_1],[e_1,e_5]=e_6=-[e_5,e_1], $
\end{tabular}  & $ 4$  & $ 4$  \\
\hline
$L_{28}$ & \begin{tabular}{c}
$[e_1,e_1]=e_6, [e_1,e_2]=e_3=-[e_2,e_1]$ \\
$[e_1,e_3]= e_4=-[e_3,e_1],$ \\
$[e_2,e_3]=e_5=-[e_3,e_2], [e_1,e_4]=e_5=-[e_4,e_1]$ \\
$[e_2,e_4]=e_6=-[e_4,e_2], [e_1,e_5]=e_6=-[e_5,e_1]$ \\
\end{tabular}  & $4 $  & $ 4$  \\
\hline
$L_{29}$ & \begin{tabular}{c}
$[e_1,e_1]=e_6, [e_1,e_2]=e_3=-[e_2,e_1]$ \\
$[e_1,e_3]= e_4=-[e_3,e_1],[e_1,e_4]=e_5=-[e_4,e_1],$ \\ $[e_2,e_5]=e_6=-[e_5,e_2]$
\end{tabular}  & $ 4$  & $ 4$  \\
\hline
$L_{30}$ & \begin{tabular}{c}
$[e_1,e_1]=e_6, [e_1,e_2]=e_3=-[e_2,e_1]$ \\
$[e_1,e_3]= e_4=-[e_3,e_1],[e_2,e_3]=e_5=-[e_3,e_2]$ \\
$[e_1,e_4]=e_5=-[e_4,e_1]$,$[e_2,e_5]=e_6=-[e_5,e_2]$
\end{tabular}  & $ 4$  & $ 4$  \\
\hline
\end{tabular}
\end{center}
\end{table}

\begin{table}[htp] \caption{$6$-dimensional nilpotent Leibniz algebras with derived algebra of codimension $2$ (IV).}
\label{alphaybetaleibnizdim6nilpotentIV}
\begin{center}
\begin{tabular}{|c|c|c|c|}
\hline
$L$ & Products & $\alpha(L)$ & $\beta(L)$ \\
\hline
$L_{31}$ & \begin{tabular}{c}
$[e_1,e_1]=e_6, [e_1,e_2]=e_3=-[e_2,e_1]$ \\
$[e_2,e_3]= e_4=-[e_3,e_2],[e_2,e_4]=e_5=-[e_4,e_2]$ \\
$[e_2,e_5]=e_6=-[e_5,e_2]$
\end{tabular}  & $ 4$  & $ 4$  \\
\hline
$L_{32}$ & \begin{tabular}{c}
$[e_1,e_1]=e_6, [e_1,e_2]=e_3=-[e_2,e_1]$ \\
$[e_1,e_3]= e_6=-[e_3,e_1],[e_2,e_3]=e_4=-[e_3,e_2]$\\
$[e_2,e_4]=e_5=-[e_4,e_2]$, $[e_2,e_5]=e_6=-[e_5,e_2]$
\end{tabular}  & $ 4$  & $ 4$  \\
\hline
$L_{33}$ & \begin{tabular}{c}
$[e_1,e_1]=e_6, [e_1,e_2]=e_3=-[e_2,e_1]$ \\
$[e_1,e_3]= e_5=-[e_3,e_1],[e_2,e_3]=e_4=-[e_3,e_2]$\\
$[e_1,e_4]=e_6=-[e_4,e_1]$, $[e_2,e_4]=e_5=-[e_4,e_2]$ \\
$[e_2,e_5]=e_6=-[e_5,e_2]$
\end{tabular}  & $ 4$  & $4 $  \\
\hline
$L_{34}$ & \begin{tabular}{c}
$[e_1,e_1]=e_6, [e_1,e_2]=e_3=-[e_2,e_1]$ \\
$[e_2,e_3]= e_4=-[e_3,e_2],[e_2,e_4]=e_5=-[e_4,e_2]$\\
$[e_1,e_5]=e_6=-[e_5,e_1]$,
\end{tabular}  & $ 4$  & $ 4$  \\
\hline
$L_{35}$ & \begin{tabular}{c}
$[e_1,e_1]=e_6, [e_1,e_2]=e_3=-[e_2,e_1]$ \\
$[e_1,e_4]=e_5=-[e_4,e_1], [e_2,e_3]= e_4=-[e_3,e_2],$ \\
$[e_2,e_4]=e_5=-[e_4,e_2]$, $[e_1,e_5]=e_6=-[e_5,e_1]$
\end{tabular}  & $ 4$  & $ 4$  \\
\hline
$L_{36}^{a}$ & \begin{tabular}{c}
$[e_1,e_1]=e_6, [e_1,e_2]=e_3=-[e_2,e_1]$ \\
$[e_1,e_3]=ae_5=-[e_3,e_1], [e_2,e_3]= e_4=-[e_3,e_2],$ \\
$[e_2,e_4]=e_5=-[e_4,e_2]$, $[e_1,e_5]=e_6=-[e_5,e_1]$ \\
$[e_2,e_5]=e_6=-[e_5,e_2]$
\end{tabular}  & $ 4$  & $ 4$  \\
\hline
$L_{37}^{a}$ & \begin{tabular}{c}
$[e_1,e_1]=e_6, [e_1,e_2]=e_3=-[e_2,e_1]$ \\
$[e_2,e_2]=e_6, [e_1,e_3]=e_4=-[e_3,e_1],$ \\ $[e_2,e_3]= ae_6=-[e_3,e_2],$ \\
$[e_1,e_4]=e_5=-[e_4,e_1]$, $[e_1,e_5]=e_6=-[e_5,e_1]$
\end{tabular}  & $ 4$  & $4 $  \\
\hline
$L_{38}^{a,b}$ & \begin{tabular}{c}
$[e_1,e_1]=e_6, [e_1,e_2]=e_3=-[e_2,e_1]$ \\
$[e_2,e_2]=a e_6, [e_1,e_3]=e_4=-[e_3,e_1],$ \\ $[e_2,e_3]= e_5+be_6=-[e_3,e_2],$ \\
$[e_1,e_4]=e_5=-[e_4,e_1]$,
$[e_2,e_4]=e_6=-[e_4,e_2]$, \\
$[e_1,e_5]=e_6=-[e_5,e_1]$
\end{tabular}  & $ 4$  & $ 4$  \\
\hline
\end{tabular}
\end{center}
\end{table}

\begin{table}[htp] \caption{$6$-dimensional nilpotent Leibniz algebras with derived algebra of codimension $2$ (V).}
\label{alphaybetaleibnizdim6nilpotentV}
\begin{center}
\begin{tabular}{|c|c|c|c|}
\hline
$L$ & Products & $\alpha(L)$ & $\beta(L)$ \\
\hline
$L_{39}^{a}$ & \begin{tabular}{c}
$[e_1,e_1]=e_6, [e_1,e_2]=e_3=-[e_2,e_1]$ \\
$[e_2,e_2]=e_6, [e_1,e_3]=e_4=-[e_3,e_1],$ \\
$[e_1,e_4]=e_5=-[e_4,e_1]$,
$[e_1,e_5]=ae_6=-[e_5,e_1]$, \\
$[e_2,e_5]=e_6=-[e_5,e_2]$
\end{tabular}  & $ 4$  & $ 4$  \\
\hline
$L_{40}^{a,b}$ & \begin{tabular}{c}
$[e_1,e_1]=e_6, [e_1,e_2]=e_3=-[e_2,e_1]$ \\
$[e_2,e_2]=a e_6, [e_1,e_3]=e_4=-[e_3,e_1],$ \\
$[e_2,e_3]=e_5=-[e_3,e_2]$
$[e_1,e_4]=e_5=-[e_4,e_1]$, \\
$[e_2,e_4]=be_6=-[e_4,e_2]$, \\
$[e_1,e_5]=be_6=-[e_5,e_1]$,
$[e_2,e_5]=e_6=-[e_5,e_2]$
\end{tabular}  & $ 4$  & $ 4$  \\
\hline
$L_{41}^{a,b}$ & \begin{tabular}{c}
$[e_1,e_1]=e_6, [e_1,e_2]=e_3=-[e_2,e_1]$ \\
$[e_2,e_2]= e_6, [e_1,e_3]=e_4=-[e_3,e_1],$ \\
$[e_2,e_3]=ie_4+ae_5=-[e_3,e_2]$ \\
$[e_1,e_4]=e_5=-[e_4,e_1]$, \\
$[e_2,e_4]=ie_5+be_6=-[e_4,e_2]$, \\
$[e_1,e_5]=e_6=-[e_5,e_1]$,
\end{tabular}  & $ 4$  & $4 $  \\
\hline
$L_{42}^{a,b}$ & \begin{tabular}{c}
$[e_1,e_1]=e_6, [e_1,e_2]=e_3=-[e_2,e_1]$ \\
$[e_2,e_2]=e_6, [e_1,e_3]=e_4=-[e_3,e_1],$ \\
$[e_2,e_3]=ie_4+ae_5=-[e_3,e_2]$ \\
$[e_1,e_4]=e_5=-[e_4,e_1]$, \\
$[e_2,e_4]=ie_5+be_6=-[e_4,e_2]$, \\
$[e_1,e_5]=e_6=-[e_5,e_1]$,
$[e_2,e_5]=ie_6=-[e_5,e_2]$
\end{tabular}  & $ 4$  & $4 $  \\
\hline
\end{tabular}
\end{center}
\end{table}

Note that Leibniz algebra $L_3$ of Table \ref{alphaybetaleibnizdim5_nilrad3_I} is an example over the complex numbers for which $\alpha$ and $\beta$ are different, unlike the Lie algebra case (see \cite[Proposition 2.6]{bc}).

\newpage

\end{document}